\documentclass[12pt,reqno,a4paper]{amsart}
\usepackage{blindtext}
\usepackage{fullpage}
\usepackage{mathtools}
\usepackage{longtable}
\usepackage{amsmath,amssymb,amsthm}
\usepackage{amscd}
\usepackage{bm}
\usepackage{hyperref}   
\usepackage{dsfont}
\usepackage{enumerate}
\usepackage{epsfig}
\usepackage{float, graphicx}
\usepackage{latexsym, amsxtra}
\usepackage{mathrsfs}
\usepackage{multicol}
\usepackage[normalem]{ulem}
\usepackage{psfrag}
\usepackage[parfill]{parskip}
\usepackage{stmaryrd}
\usepackage{tikz}
\usepackage[T1]{fontenc}
\usepackage{url}
\usepackage{verbatim}
\usepackage{indentfirst}
\usepackage{tikz-cd}
\usepackage{booktabs}
\usepackage{svg}

\usepackage{mathtools}

\usepackage{caption} 

\makeatletter
\def\thm@space@setup{%
	\thm@preskip=2ex \thm@postskip=2ex
}
\makeatother

\hypersetup{hidelinks}

\newtheorem{thm}{Theorem~}[section]
\newtheorem{lem}[thm]{Lemma~}

\newtheorem{cor}[thm]{Corollary~}

\theoremstyle{remark}
\newtheorem{rmk}[thm]{Remark~}

\theoremstyle{definition}  % 设置环境样式为 "definition"

\newcommand{\CC}{\mathbb{C}}
\newcommand{\ZZ}{\mathbb{Z}}
\newcommand{\RR}{\mathbb{R}}
\newcommand{\LL}{\mathbb{L}}
\newcommand{\PP}{\mathbb{P}}

\newcommand{\calM}{\mathcal{M}}

\newcommand\PU{\mathrm{PU}}

\newcommand{\bs}{\backslash}

\newcommand{\pFq}[5]{{}_{#1}F_{#2}\left(\genfrac..{0pt}{}{#3}{#4};#5\right)}

\newcommand{\Fd}[5]{{}F^{#1}_{#2}\left(\genfrac..{0pt}{}{#3}{#4};#5\right)}

\title{Double integrals and transformation formulas for Appell--Lauricella hypergeometric functions $F_D$}
\vspace{1.2cm}
\author{Shihao Wang, Chenglong Yu, Zhiwei Zheng}
\date{}

\newcommand{\Addresses}{{% additional braces for segregating \footnotesize
		\bigskip
		\footnotesize

        S.~Wang, \textsc{Tsinghua University, Beijing, China}\par\nopagebreak
		\textit{Email address}: \texttt{wangshihao112@gmail.com}
        
		C.~Yu, \textsc{Center for Mathematics and Interdisciplinary Sciences, Fudan University and
Shanghai Institute for Mathematics and Interdisciplinary Sciences (SIMIS), Shanghai, China}\par\nopagebreak
		\textit{Email address}: \texttt{yuchenglong@simis.cn}
		
		\medskip
		
		Z.~Zheng, \textsc{Tsinghua University, Beijing, China}\par\nopagebreak
		\textit{Email address}: \texttt{zhengzhiwei@mail.tsinghua.edu.cn}
}}

\begin{document}
\bibliographystyle{amsalpha}

\begin{abstract}
The monodromy of hypergeometric functions can govern the properties of the functions themselves. Previously, the second and third authors studied the commensurability relations among monodromy groups of the Appell--Lauricella hypergeometric functions using Deligne--Mostow theory and the geometric correspondence between curves and surfaces. In this paper, we apply the same construction to obtain transformation formulas among these hypergeometric functions. This also provides an alternative approach to some of Goursat's quadratic transformations via double integrals and Fubini's theorem.
\end{abstract}
	
	\maketitle
 \setcounter{tocdepth}{1}
	\tableofcontents

 \section{Introduction}
The quadratic or higher order transformations of Euler--Gauss hypergeometric functions date back to Gauss \cite{gauss1866determinatio}, Kummer \cite{Kummer1836} and Goursat \cite{goursat1881equation}. They obtained these formulas by studying the corresponding hypergeometric differential equations. The work of Riemann \cite{riemann1857beitrage}, Fuchs \cite{fuchs1866}, Schwarz \cite{schwarz1873} and many other mathematicians established the powerful connection between monodromy groups of the hypergeometric systems and properties of hypergeometric functions. Especially when the two monodromy groups are commensurable (namely, after suitable conjugation, they have common finite-index subgroup), the corresponding hypergeometric functions are expected to be related by transformation formulas. 

For multivariable generalization of hypergeometric functions, such as Appell--Lauricella hypergeometric functions, there are also many works for transformation formulas following similar philosophy of comparing the differential systems, see for example \cite{koike2007isogeny,koike2008extended,matsumoto2009some,matsumoto2010transformation,otsubo2021new}. For the monodromy groups of Appell--Lauricella hypergeometric functions, Sauter \cite{sauter1990isomorphisms} and Deligne--Mostow \cite{deligne1993commensurabilities} studied their commensurability relations as subgroups in $\PU(1, 2)$. Deligne--Mostow also related them to identities of hypergeometric functions, see \cite[Chapter 13]{deligne1993commensurabilities}. The method is based on the comparison of the moduli spaces with conic complex hyperbolic structures. In \cite{yu2024comm}, the second and third authors studied the commensurability relations among monodromy groups of Appell--Lauricella hypergeometric functions in Deligne--Mostow theory through the geometry of certain surfaces with double fibrations. There are new relations found, see \cite[Table 2]{yu2024comm} for a full list of such commensurability relations.

In this paper, we apply the same construction (surfaces with double fibrations) to obtain transformation formulas between the corresponding hypergeometric functions. The key idea is to study period integrals for certain volume forms on these surfaces and use two different iterated integrals reducing to hypergeometric functions. This can be seen as a period integral version of the Hodge isometry obtained in \cite[Theorem 3.6]{yu2024comm}. 

We list some of the transformations here. For suitable range of parameters, we have
\begin{enumerate}
    \item Theorem \ref{thm:transformula(1,2)(1,1)(1,0)}:   \begin{eqnarray*}
        &&\Fd{(3)}{D}{2-2a;1-a,1-a,1-a}{4-4a}{\frac{2(\beta_{2}-\beta_{1})t}{\alpha_{2}\beta_{1}-\alpha_{1}\beta_{2}},-\frac{2t}{\alpha_{1}},-\frac{2t}{\alpha_{2}}}\\
        &=&C\cdot \Fd{(3)}{D}{1-a; a-\frac{1}{2},1-a,1-a}{\frac{5}{2}-2a}{\frac{1}{t^2},\frac{1}{\beta_{1}},\frac{1}{\beta_{2}}},
    \end{eqnarray*}
    where $C=(\frac{t^{2}(\alpha_{1}+2t)(\alpha_{2}+2t)}{\alpha_{1}\alpha_{2}(1-t)^2})^{a-1}$ and $\beta_i=-\frac{\alpha_{i}(\alpha_{i}+2t)}{2\alpha_{i}+(1+t)^{2}}$.

    \item Theorem \ref{thm:transformula1(1,2)(1,0)(1,0)(0,1)}:
\begin{eqnarray*}
    &&\pFq{}{1}{\frac{3}{2}-a-b;1-a-b,1-b}{\frac{3}{2}-b}{s^{2},t^2}\\
    &=&(1-t)^{2b-2}(1+s)^{2a+2b-2}\cdot \pFq{}{1}{1-b;a,2-2a-2b}{2-2b}{-\frac{4t}{(1-t)^{2}},\frac{2(s-t)}{(1-t)(1+s)}},
\end{eqnarray*}

\item Theorem \ref{thm:sector region for (1,2)+(1,0)+(1,0)+(0,1)}:
\begin{eqnarray*}
    &&t_1^{2a+2b-1}(1-t_2^2)^{a+b-1}\pFq{}{1}{2a+2b-1;b,b}{a+2b}{\frac{(1-t_1)(t_2+1)}{2(t_2-t_1)},\frac{(1-t_1)(t_2-1)}{2(t_2+t_1)}}\\
    &=&(t_1^2-t_2^2)^{a+b-1}\pFq{}{1}{a+b;a+b-\frac{1}{2},1-a}{a+2b}{\frac{t_1^2-1}{t_1^2},\frac{t_1^2-1}{t_1^2-t_2^2}}.
\end{eqnarray*}
\end{enumerate}
Certain degenerations give rise to transformations for Euler--Gauss hypergeometric functions:
\begin{enumerate}
    \item Corollary \ref{Goursat1} (Goursat's quadratic transformation):
    \begin{eqnarray*}
    \pFq{2}{1}{a,b}{a+b-{1\over 2}}{z}=(1-z)^{-{1\over 2}} \pFq{2}{1}{2a-1, 2b-1}{a+b-{1\over 2}}{ {1\over 2}-{1\over 2} \sqrt{1-z}}.
\end{eqnarray*}

\item Corollary \ref{Goursat2} (Goursat's quadratic transformation):
\begin{eqnarray*}
    \pFq{2}{1}{a,b}{2b}{z}=(1-z)^{-\frac{a}{2}}\pFq{2}{1}{a,2b-a}{b+\frac{1}{2}}{-\frac{1}{4}(1-z)^{-\frac{1}{2}}(1-(1-z)^{\frac{1}{2}})^2}.
\end{eqnarray*}

\item Corollary \ref{not known yet}:
\begin{eqnarray*}
    \pFq{2}{1}{\frac{4}{3}-a,2-2a}{3-3a}{-\frac{t(2t+3)}{t+2}}
    = C\cdot \pFq{2}{1}{\frac{5}{6},\frac{3}{2}-a}{\frac{5}{3}-a}{-\frac{(2t+3)^{3}}{t^{3}(t+2)}},
\end{eqnarray*}
where $C=2^{2a-2}3^{\frac{5}{2}-3a}(-t)^{3a-\frac{9}{2}}(t+2)^{\frac{1}{2}-a}(t+3)^{2}$.
\end{enumerate}

The paper is organized as follows: In Section \ref{section: preliminary}, we review the definitions and integral representations of Gauss hypergeometric functions, Appell hypergeometric functions and Appell--Lauricella hypergeometric functions. In Section \ref{section: two lemmas}, we recall two lemmas to reduce single-variable integrals to Appell--Lauricella hypergeometric functions. In Section \ref{section: surfaces}, we outline the type of period integrals on surfaces. In the rest sections \ref{section:partition(1,2)(1,1)(1,0)}, \ref{section: (1,2)+(1,0)+(1,0)}, \ref{section: (2,2)+(1,0)+(0,1)}, \ref{section: (1,2)+(2,1)} and \ref{section:(1,3)+(1,0)+(1,0)}, we give several examples of transformation formulas obtained by the double integrals.

\textbf{Acknowledgements}: The second author is supported by the national key research and development program of China (No. 2022YFA1007100) and NSFC 12201337. The third author is partially supported by NSFC 12301058.

%In principle, every commensurability relation via the surfaces in \cite{yu2024comm} gives rise to a transformation formula. 

\section{A Review of Hypergeometric Functions}
\label{section: preliminary}

\subsection{Gauss Hypergeometric Function}
The Gauss hypergeometric function ${}_2F_1$ is defined by the series:
\[
\pFq{2}{1}{a, b}{c}{z} = \sum_{n=0}^{\infty} \frac{(a)_n (b)_n}{(c)_n} \frac{z^n}{n!},
\]
where $|z| < 1$, and $(q)_n = \frac{\Gamma(q+n)}{\Gamma(q)}$ is the Pochhammer symbol (rising factorial). The parameters $a, b, c \in \mathbb{C}$ with $c \neq 0, -1, -2, \dots$.

Another way of defining the Gauss hypergeometric function is the Euler integral representation:
\[
\pFq{2}{1}{a, b}{c}{z} = \frac{\Gamma(c)}{\Gamma(a)\Gamma(c-a)} \int_0^1 t^{a-1} (1-t)^{c-a-1} (1 - z t)^{-b}  dt,\ \ \ \mathrm{Re}(c) > \mathrm{Re}(a) > 0
\]
for $|z|<1$ or more generally, by analytic continuation, for $z$ in the cut plane $\mathbb{C}\setminus [1,\infty)$.

\subsection{Appell--Lauricella Hypergeometric Function}
The Appell hypergeometric function of two variables, denoted $F_1$, is defined by:
\[
\pFq{}{1}{a;b,b^{\prime}}{c}{x,y} = \sum_{m=0}^{\infty} \sum_{n=0}^{\infty} \frac{(a)_{m+n} (b)_m (b')_n}{(c)_{m+n}} \frac{x^m y^n}{m! \, n!},
\]
convergent for $|x| < 1$, $|y| < 1$, with $a, b, b', c \in \mathbb{C}$ and $c \neq 0, -1, -2, \dots$.

For $\mathrm{Re}(c) > \mathrm{Re}(a) > 0$, the Appell hypergeometric function $\pFq{}{1}{a;b,b^{\prime}}{c}{x,y}$ admits the following integral representation:
\[
\pFq{}{1}{a;b,b^{\prime}}{c}{x,y} = \frac{\Gamma(c)}{\Gamma(a)\Gamma(c-a)} \int_0^1 t^{a-1} (1-t)^{c-a-1} (1 - x t)^{-b} (1 - y t)^{-b'}  dt,
\]
which converges for $|x| < 1$, $|y| < 1$, or more generally, by analytic continuation, for $x, y \in \mathbb{C} \setminus [1, \infty)$.

This generalizes the Euler integral for ${}_2F_1$ and is valid when $\mathrm{Re}(c) > \mathrm{Re}(a) > 0$.

In general, the Appell--Lauricella hypergeometric function is defined as follows:
\begin{eqnarray*}
    \Fd{(n)}{D}{a;b_{1},\cdots,b_{n}}{c}{x_{1},\cdots,x_{n}}=\sum_{m_{1},\cdots,m_{n}=0}^{\infty} \frac{(a)_{m_{1}+\cdots+m_{n}} (b_{1})_{m_{1}}\cdots (b_{n})_{m_{n}}}{(c)_{m_{1}+\cdots +m_{n}}} \frac{x_{1}^{m_{1}}\cdots x^{m_{n}}_{n}}{m_{1}!\cdots \, m_{n}!}
\end{eqnarray*}

Again, for $\mathrm{Re}(c) > \mathrm{Re}(a) > 0$, the Appell--Lauricella hypergeometric function admits the following integral representation:
\begin{eqnarray*}
    &&\Fd{(n)}{D}{a;b_{1},\cdots,b_{n}}{c}{x_{1},\cdots,x_{n}}\\
    &=&\frac{\Gamma(c)}{\Gamma(a)\Gamma(c-a)} \int_0^1 t^{a-1} (1-t)^{c-a-1} (1 - x_1 t)^{-b_1}(1-x_2t)^{-b_2}\cdots (1 - x_n t)^{-b_n}  dt.
\end{eqnarray*}
This integral representation also defines an analytic continuation of Appell--Lauricella hypergeometric functions for $x_i\in \CC\bs [1,\infty)$.

\section{Two Lemmas}
\label{section: two lemmas}
The results in this section relate certain period integrals of single variable to Appell--Lauricella hypergeometric functions.
\begin{lem}
\label{lemma: 4 poles}
Suppose $n\geq4$. Let $x_1,x_{2},\cdots,x_n$ be distinct real numbers that satisfy $x_{1}<x_{2}$ and $x_{3},\cdots,x_{n}\notin (x_{1},x_{2})$. Let $\mu_1,\cdots,\mu_n$ be real numbers that satisfy $\mu_{1},\mu_{2}<1$ and the sum of all $\mu_i$ is equal to 2. Let $$R_i=\frac{(x_i-x_3)(x_2-x_1)}{(x_i-x_1)(x_2-x_3)}.$$ 

Let $$\omega= {dx\over (x-x_1)^{\mu_1} (x_2-x)^{\mu_2} |x-x_3|^{\mu_3}|x-x_4|^{\mu_4}\cdots|x-x_n|^{\mu_n}}.$$

Then 
\begin{eqnarray*}
    \int_{x_1}^{x_2}\omega=C\cdot \Fd{(n-3)}{D}{1-\mu_1;\mu_4,\cdots,\mu_n}{2-\mu_1-\mu_2}{R_4,\cdots,R_n},
\end{eqnarray*}
where 
\begin{eqnarray*}
    C&=&|x_2-x_3|^{\mu_1-1}(x_2-x_1)^{1-\mu_1-\mu_2}|x_1-x_3|^{1-\mu_1-\mu_3}|x_1-x_4|^{-\mu_4}
    \\&&|x_1-x_5|^{-\mu_5}\cdots|x_1-x_n|^{-\mu_n}B(1-\mu_1,1-\mu_2).
\end{eqnarray*}
\end{lem}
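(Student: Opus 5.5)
The plan is to reduce the integral to the Euler-type integral representation of $F_D^{(n-3)}$ from Section~\ref{section: preliminary} by a Möbius change of variables. This change of variables sends $x_1\mapsto 0$, $x_2\mapsto 1$ and $x_3\mapsto\infty$. Concretely, I would set
\[
u=\frac{(x-x_1)(x_2-x_3)}{(x-x_3)(x_2-x_1)} .
\]
This is a real Möbius transformation. Since $x_3\notin[x_1,x_2]$, it maps the interval $(x_1,x_2)$ bijectively onto $(0,1)$. For $i\ge 4$ it sends $x_i$ to $u_i=1/R_i$. Because $x_i\notin[x_1,x_2]$, the point $u_i$ lies outside $[0,1]$, so $1-R_iu$ does not vanish on $(0,1)$. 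Since it equals $1$ at $u=0$, it is positive there.

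The key computation uses the standard identity for a Möbius map $x=\phi(u)$. With $x_3=\phi(\infty)$, one has
\[
x-x_i=\frac{\kappa_i\,(u-u_i)}{(cu+d)},\qquad dx=\frac{\det}{(cu+d)^2}\,du ,
\]
where $cu+d$ is proportional to $(x-x_3)^{-1}$. Taking absolute values on $(0,1)$, each factor $|x-x_i|^{-\mu_i}$ with $i\neq 3$ contributes $|u-u_i|^{-\mu_i}|cu+d|^{\mu_i}$ times a constant. The factor $|x-x_3|^{-\mu_3}$ contributes $|cu+d|^{\mu_3}$ times a constant, and $dx$ contributes $|cu+d|^{-2}$. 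The total exponent of $|cu+d|$ is therefore $\sum_i\mu_i-2=0$. This is exactly where the hypothesis $\sum\mu_i=2$ enters: the point $x_3$ is sent to $\infty$ without leaving a residual factor. What remains is
\[
\int_{x_1}^{x_2}\omega = K\int_0^1 u^{-\mu_1}(1-u)^{-\mu_2}\prod_{i\ge4}(1-R_iu)^{-\mu_i}\,du .
\]
Here $|u-u_i|$ has been rewritten as $|u_i|\,(1-R_iu)$. By the integral representation with $a=1-\mu_1$ and $c-a=1-\mu_2$, so that $c=2-\mu_1-\mu_2$, the right-hand side equals $K\,B(1-\mu_1,1-\mu_2)\,F_D^{(n-3)}(1-\mu_1;\mu_4,\dots,\mu_n;2-\mu_1-\mu_2;R_4,\dots,R_n)$. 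The conditions $\mu_1,\mu_2<1$ guarantee convergence at the endpoints.

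The main obstacle is the bookkeeping of the constant $K$, which must be matched to the stated $C$. To keep this manageable, I would not use the abstract $\kappa_i$. Instead I would invert explicitly,
\[
x=\frac{x_1(x_2-x_3)-x_3(x_2-x_1)u}{(x_2-x_3)-(x_2-x_1)u},
\]
and compute directly
\[
x-x_i=\frac{(x_1-x_i)(x_2-x_3)-(x_3-x_i)(x_2-x_1)u}{(x_2-x_3)-(x_2-x_1)u}.
\]
For $i=1,2,3$ this specializes to simple monomials in $u$ and $1-u$. For $i\ge4$ it equals $(x_1-x_i)(x_2-x_3)(1-R_iu)$ divided by the common denominator. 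Collecting the powers of $|x_2-x_3|$, $(x_2-x_1)$, $|x_1-x_3|$ and $|x_1-x_i|$ should yield exactly the factors in $C$. The powers of $|x_2-x_3|$ from all the $i\ge4$ terms, from $dx$ and from $i=1,2,3$ combine, via $\sum\mu_i=2$, to the exponent $\mu_1-1$. I would check this exponent count last, as a consistency test of the whole computation.
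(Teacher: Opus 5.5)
Your proposal is correct and is essentially the paper's proof: the same M\"obius substitution $u=\frac{(x-x_1)(x_2-x_3)}{(x-x_3)(x_2-x_1)}$ sending $x_1,x_2,x_3,x_i$ to $0,1,\infty,R_i^{-1}$. The hypothesis $\sum\mu_i=2$ cancels the common denominator factor, and the result is then read off from the Euler integral for $F_D^{(n-3)}$ with $a=1-\mu_1$, $c=2-\mu_1-\mu_2$. Your explicit exponent bookkeeping checks out and reproduces the stated $C$: $\mu_1-1$ for $|x_2-x_3|$, $1-\mu_1-\mu_2$ for $x_2-x_1$, and $1-\mu_1-\mu_3$ for $|x_1-x_3|$. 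The paper's intermediate display writes $|x_1-x_4|^{1-\mu_1-\mu_3}$ where $|x_1-x_3|^{1-\mu_1-\mu_3}$ is meant; your computation gives the correct factor.
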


\begin{proof}
Consider M\"obius transformation $x\mapsto y={x-x_1\over x-x_3} {x_2-x_3\over x_2-x_1}$, then $x_1, x_2, x_3, x_4,x_5$ $x_6,\cdots, x_n$ are mapped to $0,1,\infty, R_4^{-1},R_{5}^{-1},R_{6}^{-1},\cdots,R_n^{-1}$ respectively.

Then $x=f(y)=x_3+{\alpha(x_1-x_3)\over \alpha-y}$ with $\alpha={x_2-x_3\over x_2-x_1}$. One has $dx=f'(y)dy= {|\alpha| |x_1-x_3|\over (\alpha-y)^2} dy$.

With respect to $x=f(y)$, the differential form $\omega$ is transformed to 
\begin{equation*}
|x_2-x_3|^{\mu_1-1}(x_2-x_1)^{1-\mu_1-\mu_2}|x_1-x_4|^{1-\mu_1-\mu_3}\prod\limits_{i=4}^{n}|x_1-x_i|^{-\mu_n}{dy\over y^{\mu_1} (1-y)^{\mu_2}\prod\limits_{i=3}^{n}(1-R_{i}y)^{\mu_i}}
\end{equation*}
This proves the lemma.
\end{proof}

When we let $x_3\to \infty$, we obtain formula when one of the poles is at infinity.

\begin{cor}
\label{corollary: poles + infty}

Suppose $n\geq3$. Let $x_1,x_2,x_4\cdots,x_{n}$ be distinct real numbers that satisfy $x_{1}<x_{2}$ and $x_{4},x_{5},\cdots,x_{n}\notin (x_{1},x_{2})$. Assume $\mu_1,\mu_2,\mu_3,\cdots,\mu_{n}$ are real numbers such that $\mu_{1},\mu_{2}<1$ and sum of all $\mu_i$ is equal to $2$. Denote $$R_i=\frac{x_2-x_1}{x_i-x_1}$$ for $4\leq i\leq n$.

Let $$\omega= {dx\over (x-x_1)^{\mu_1} (x_2-x)^{\mu_2} |x-x_4|^{\mu_4}\cdots|x-x_{n}|^{\mu_{n}}},$$then 
\begin{eqnarray*}
    \int_{x_1}^{x_2}\omega=C\cdot \Fd{(n-3)}{D}{1-\mu_1;\mu_4,\cdots,\mu_n}{2-\mu_1-\mu_2}{R_4,\cdots,R_n},
\end{eqnarray*}
where 
\begin{eqnarray*}
    C&=&(x_2-x_1)^{1-\mu_1-\mu_3}|x_1-x_4|^{-\mu_4}
    |x_1-x_5|^{-\mu_5}\cdots|x_1-x_{n}|^{-\mu_{n}}B(1-\mu_1,1-\mu_2).
\end{eqnarray*}
\end{cor}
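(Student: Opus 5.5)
Since $x_3$ no longer appears in $\omega$ (the pole "at $x_3$" has moved to infinity), the cleanest route is a direct affine substitution rather than a limit. I will check that the answer agrees with the limit $x_3\to\infty$ in Lemma \ref{lemma: 4 poles}. Set $x = x_1 + (x_2-x_1)y$, which sends $x_1,x_2,\infty$ to $0,1,\infty$ and gives $dx=(x_2-x_1)\,dy$.

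Under this substitution,
\[
x-x_1=(x_2-x_1)y,\qquad x_2-x=(x_2-x_1)(1-y),\qquad |x-x_i|=|x_1-x_i|\,|1-R_iy| \quad (4\le i\le n).
\]
For $y\in(0,1)$ and $x_i\notin(x_1,x_2)$ we have $R_i\notin[1,\infty)$, so $1-R_iy>0$ and the absolute values simply become $1-R_iy$.

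Collecting the powers of $(x_2-x_1)$ gives the exponent $1-\mu_1-\mu_2$. The integral then becomes
\[
(x_2-x_1)^{1-\mu_1-\mu_2}\prod_{i\ge4}|x_1-x_i|^{-\mu_i}\int_0^1 y^{-\mu_1}(1-y)^{-\mu_2}\prod_{i\ge4}(1-R_iy)^{-\mu_i}\,dy.
\]
Compare this with the Euler integral representation of $F_D^{(n-3)}$, taking $a=1-\mu_1$ and $c-a=1-\mu_2$, so $c=2-\mu_1-\mu_2$ and $b_i=\mu_i$. The integral equals $B(1-\mu_1,1-\mu_2)$ times the stated $F_D$. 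The conditions $\mu_1,\mu_2<1$ are exactly what guarantee $\mathrm{Re}(c)>\mathrm{Re}(a)>0$.

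The only point needing care is reconciling the constant. Since $\sum\mu_i=2$, we have $1-\mu_1-\mu_2=\mu_3+\sum_{i\ge4}\mu_i-1$, so the statement's exponent $1-\mu_1-\mu_3$ should be read against this relation. I expect this bookkeeping to be the main obstacle. I will double-check it by taking the limit in Lemma \ref{lemma: 4 poles}. There, multiplying $\omega$ by $|x_3|^{\mu_3}$ makes the $x_3$-dependence cancel: the factors $|x_2-x_3|^{\mu_1-1}|x_1-x_3|^{1-\mu_1-\mu_3}\sim|x_3|^{-\mu_3}$, while $R_i\to (x_2-x_1)/(x_i-x_1)$. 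In that limit the exponent of $(x_2-x_1)$ comes out to $1-\mu_1-\mu_2$, matching the direct computation. The exponent written in the statement is therefore presumably a typo, with $\mu_3$ appearing where $\mu_2$ is intended, and I would record the constant in the form the direct computation gives.
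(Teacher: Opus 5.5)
Your proof is correct, and so is your diagnosis of the constant. The exponent of $(x_2-x_1)$ must be $1-\mu_1-\mu_2$, not the printed $1-\mu_1-\mu_3$; the two agree only when $\mu_2=\mu_3$. The paper's own uses of this corollary confirm the corrected form:
\begin{itemize}
\item In the second integration order for the partition $(1,2)+(1,1)+(1,0)$, one has $\mu_1=\mu_2=2a-1$ and $\mu_3=1-a$. The paper's constant $C_2$ contains $(-2t)^{3-4a}$, whereas the printed exponent would give $(-2t)^{1-a}$.
\item In the segment-region computation for $(1,2)+(1,0)+(1,0)+(0,1)$, the paper's $C_1$ contains $t_2^{1-2b}$, again matching $1-\mu_1-\mu_2$.
\end{itemize}

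Your route differs from the paper's. The paper obtains the corollary in one line by letting $x_3\to\infty$ in Lemma~\ref{lemma: 4 poles}; you use that only as a cross-check. Your main argument is the direct affine substitution $x=x_1+(x_2-x_1)y$, which is exactly the limit of the M\"obius map used in that lemma.

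What the direct route buys:
\begin{itemize}
\item It is self-contained. It avoids justifying the passage of the limit through the integral and through $F_D$, which the paper leaves implicit. That step is harmless by dominated convergence, since $|x_3|^{\mu_3}|x-x_3|^{-\mu_3}\to 1$ uniformly on $[x_1,x_2]$ and $\mu_1,\mu_2<1$ give integrability, but it is not written.
\item It shows that the hypothesis $\sum\mu_i=2$ does no work here beyond defining $\mu_3$, the weight of the pole at infinity. The identity therefore holds for arbitrary real $\mu_4,\dots,\mu_n$.
\end{itemize}

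What the limit route buys: it explains why $\mu_3$ appears in the hypotheses at all, and it presents the corollary as the degenerate case of the projectively invariant four-pole formula. That is how the paper uses it when the singular set of a one-variable integrand is $\{x_1,x_2,\infty,x_4,\dots\}$.
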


The following lemma serves as a function analog of the Leray--Hirsch theorem used in \cite[Theorem 3.5]{yu2024comm}. The proof is the same as Lemma \ref{lemma: 4 poles} by M\"obius transformation $y={x-x_1\over x-x_3} {x_2-x_3\over x_2-x_1}$.
\begin{lem}
\label{lemma: 3 poles}
Suppose $x_1,x_2,x_3$ are three distinct real numbers that satisfy $x_{1}<x_{2}$ and $x_{3}\notin (x_{1},x_{2})$. Let $\mu_1,\mu_2,\mu_3$ be real numbers such that $\mu_{1},\mu_{2}<1$ and $\mu_1+\mu_2+\mu_3=2$.

Let $$\omega= {dx\over (x-x_1)^{\mu_1} (x_2-x)^{\mu_2} |x-x_3|^{\mu_3}},$$ then
\begin{eqnarray*}
 \int_{x_1}^{x_2} \omega 
&=& (x_2-x_1)^{\mu_3-1} |x_2-x_3|^{\mu_1-1} |x_1-x_3|^{\mu_2-1} \int_0^1 {dy\over y^{\mu_1} (1-y)^{\mu_2}} \\
&=&  (x_2-x_1)^{\mu_3-1} |x_2-x_3|^{\mu_1-1} |x_1-x_3|^{\mu_2-1}B(1-\mu_1,1-\mu_2).
\end{eqnarray*}
\end{lem}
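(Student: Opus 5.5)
The plan is to use the same M\"obius substitution as in Lemma \ref{lemma: 4 poles}, namely
\[
y=\frac{x-x_1}{x-x_3}\cdot\frac{x_2-x_3}{x_2-x_1},
\]
and to exploit the hypothesis $\mu_1+\mu_2+\mu_3=2$. This condition says that $\omega$ has no singularity at infinity, so after the substitution the third factor disappears completely.

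\textbf{Step 1: the substitution is a bijection.} Since $x_3\notin[x_1,x_2]$, the map $x\mapsto y$ is a real M\"obius transformation with no pole on $[x_1,x_2]$. It sends $x_1\mapsto 0$, $x_2\mapsto 1$ and $x_3\mapsto\infty$. Hence it maps $[x_1,x_2]$ bijectively and monotonically onto $[0,1]$.

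\textbf{Step 2: express everything in $y$.} Put $\alpha=\frac{x_2-x_3}{x_2-x_1}$. Inverting gives $x=x_3+\frac{\alpha(x_1-x_3)}{\alpha-y}$, and one computes
\[
|dx|=\frac{|\alpha||x_1-x_3|}{(\alpha-y)^2}\,dy.
\]
For the three factors I will use the identities
\[
x-x_3=\frac{\alpha(x_1-x_3)}{\alpha-y},\qquad
x-x_1=\frac{y(x_1-x_3)}{\alpha-y},\qquad
x_2-x=\frac{(1-y)\,\alpha(x_1-x_3)}{(\alpha-1)(\alpha-y)}.
\]
The first follows directly from the inverse formula. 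For the second, $x-x_1=(x_3-x_1)+\frac{\alpha(x_1-x_3)}{\alpha-y}$. For the third, evaluate the inverse formula at $y=1$ to get $x_2$, then subtract. I also record $\alpha-1=\frac{x_1-x_3}{x_2-x_1}$.

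\textbf{Step 3: collect the powers.} Substituting into $\omega$, the total power of $|\alpha-y|$ is $\mu_1+\mu_2+\mu_3-2=0$, so the factor $(\alpha-y)$ drops out entirely. This is the only point where the hypothesis $\sum\mu_i=2$ is used, and it is what makes the integrand collapse to a pure Beta integrand.

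What remains is the constant
\[
|x_1-x_3|^{\,1-\mu_1-\mu_2-\mu_3}\;|\alpha|^{\,1-\mu_2-\mu_3}\;|\alpha-1|^{\mu_2}.
\]
Now substitute $|\alpha|=|x_2-x_3|/(x_2-x_1)$ and $|\alpha-1|=|x_1-x_3|/(x_2-x_1)$, and use $1-\mu_2-\mu_3=\mu_1-1$. The exponent of $|x_1-x_3|$ becomes $-1+\mu_2=\mu_2-1$. The exponent of $|x_2-x_3|$ is $\mu_1-1$. The exponent of $x_2-x_1$ is $-(\mu_1-1)-\mu_2=1-\mu_1-\mu_2=\mu_3-1$. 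These match the claimed prefactor.

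\textbf{Step 4: conclude.} The remaining integral is $\int_0^1 y^{-\mu_1}(1-y)^{-\mu_2}\,dy$. It converges because $\mu_1,\mu_2<1$, and it equals $B(1-\mu_1,1-\mu_2)$ by definition.

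\textbf{Main obstacle.} The only delicate part is the sign and absolute-value bookkeeping, which depends on whether $x_3<x_1$ or $x_3>x_2$. I will handle this by working throughout with absolute values, noting that $\alpha-y$ has constant sign on $[0,1]$ (since $\alpha\notin[0,1]$) and that the Jacobian has constant sign. The orientation of the $y$-interval then matches $[0,1]$ after taking absolute values.
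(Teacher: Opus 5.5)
Your proposal is correct and follows the paper's proof: the paper also applies the M\"obius substitution $y=\frac{x-x_1}{x-x_3}\cdot\frac{x_2-x_3}{x_2-x_1}$ from Lemma~\ref{lemma: 4 poles}, and the condition $\mu_1+\mu_2+\mu_3=2$ makes the $(\alpha-y)$ factor cancel. Your bookkeeping of the prefactor, which gives the exponents $\mu_3-1$, $\mu_1-1$ and $\mu_2-1$, is correct, as is your orientation argument (the map is increasing since $(x_1-x_3)(x_2-x_3)>0$).
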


\section{Double-fibration and period integrals}
\label{section: surfaces}
Let us consider divisors $D_i$ ($1\leq i\leq m$) on $\mathbb{P}^1\times \mathbb{P}^1$ and rational number $\mu_i\in (0,1)$ satisfying the following conditions (see \cite[Proposition~3.1]{yu2024comm}):
\begin{eqnarray}
\label{numerical conditions on divisors}
    \sum_{i=1}^m \deg D_i =(3,3), \,\,
    \sum_{i=1}^m \mu_i\deg D_i = (1,1)
\end{eqnarray}
Let $d$ be the common denominator of $\mu_i$ ($1\le i\le m$), and let $S$ be the $d$-fold cyclic cover of $\PP^1\times \PP^1$ branching along each $D_i$ with multiplicity $d\mu_i$. The cyclic group $\ZZ/d\ZZ$ operates on $S$. Conditions \eqref{numerical conditions on divisors} imply that $\dim H^{2,0}_\chi(S)=1$ for tautological character $\chi$ of $\ZZ/d\ZZ$. %In this sense, we say that the Hodge structure on $H^2_\chi(S)$ is of Calabi-Yau type. 

Assume that the divisors $D_i$ are defined by equation $f_i(x,y)=0$, where $(x,y)$ is the affine coordinate of $\mathbb{P}^1\times \mathbb{P}^1$. Then an affine open subset of $S$ is given by 
\[
z^d=\prod_{i=1}^m f_i(x,y)^{d\mu_i}.
\]
A generator of $H^{2,0}_\chi(S)$ is given by 
\begin{equation*}
    \omega={dx\wedge dy\over \prod_{i=1}^m f_i(x,y)^{1-\mu_i}}.
\end{equation*}
A slight generalization of cyclic cover to allow real parameters $\mu_i$ is as follows. Denote $U=(\mathbb{P}^1\times \mathbb{P}^1) \setminus \bigcup_{i=1}^m D_i$. Let $\mathbb{L}$ be the rank-one local system on $U$ with local monodromy around $D_i$ given by $e^{2\pi \sqrt{-1} \mu_i}$. Since the surface $U$ is smooth and affine, the cohomology $H^2(U,\mathbb{L})$ admits a de-Rham description via de-Rham complex twisted by $\mathbb{L}$. In particular, the two-form $\omega$ defines a cohomology class in $H^2(U,\mathbb{L})$. Let $\Delta$ be a two-simplex and consider a continuous map 
\[
\sigma\colon \Delta \to \mathbb{P}^1\times \mathbb{P}^1
\]
such that $\sigma(\partial \Delta)\subset \bigcup_{i=1}^m D_i$ and the interior of $\Delta$ is contained in $U$. Choose a section $e$ of $\sigma^*\mathbb{L}$ on the interior of $\Delta$. Then the cycle $\Delta\otimes e$ represents a section of $H_2^{\text{BM}}(U, \LL)$. When $D=\sum\limits_{i=1}^m D_i$ is simple normal crossing, this cycle can be lifted to $H_2(U, \LL)$. More generally, we recall the following well-known fact (see, for example, \cite[Lemma 2.4]{xie2025localsystem} for a proof):

\begin{lem}
    Let $X$ be a projective smooth surface and $D=\sum\limits_{i=1}^m D_i$ be a simple normal crossing divisor on $X$ with smooth components $D_i$. Denote by $U$ the complement of $D$ and $j\colon U\xrightarrow{} X$ the open inclusion. Assume that $\LL$ is a rank-one complex local system on $U$ and the local monodromy around $D_i$ is given by $e^{2\pi \sqrt{-1} \mu_i}$ with real numbers $0<\mu_i<1$. Then the natural map $H_n(U, \LL)\to H_n^{BM}(U, \LL)$ is an isomorphism.
\end{lem}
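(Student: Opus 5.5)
The plan is to compare both homology groups with a third, intermediate object, namely the cohomology of $X$ with coefficients in a sheaf built from $\LL$, and to show that the difference between the two is concentrated near $D$ and vanishes there. By Poincaré–Verdier duality on the smooth real $4$-manifold $U$, we have $H_n^{BM}(U,\LL)\cong H^{4-n}(U,\LL)$ and $H_n(U,\LL)\cong H^{4-n}_c(U,\LL)$. Under these identifications the natural map $H_n\to H_n^{BM}$ becomes the forget-supports map $H^{4-n}_c(U,\LL)\to H^{4-n}(U,\LL)$. Since $X$ is compact, these groups are $H^{*}(X, j_!\LL)$ and $H^{*}(X, Rj_*\LL)$, and the map is induced by the natural morphism $j_!\LL\to Rj_*\LL$. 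It therefore suffices to show that this morphism is a quasi-isomorphism. Equivalently, we need $(Rj_*\LL)_p=0$ for every $p\in D$, because both complexes restrict to $\LL$ on $U$ and $j_!\LL$ has zero stalks along $D$.

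Next comes the local computation. Take $p\in D$. Because $D$ is simple normal crossing, $p$ has a neighbourhood $B$ (a polydisc in local coordinates) with $B\cap U$ equal to either $\Delta^*\times\Delta$ (if $p$ lies on exactly one component $D_i$) or $\Delta^*\times\Delta^*$ (if $p\in D_i\cap D_k$). In both cases $B\cap U$ is homotopy equivalent to a torus, $S^1$ or $S^1\times S^1$. The stalk of $R^qj_*\LL$ at $p$ is $H^q(B\cap U,\LL)$. The restriction of $\LL$ is the external tensor product of rank-one systems on each $S^1$ factor; on a factor around $D_i$ the monodromy is $e^{2\pi\sqrt{-1}\mu_i}$, and on a factor that is a disc the system is trivial.

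The Künneth formula then reduces everything to the circle. For a rank-one local system on $S^1$ with monodromy $\lambda\neq1$, both $H^0$ and $H^1$ vanish, since they are the kernel and cokernel of $\lambda-1$. The hypothesis $0<\mu_i<1$ guarantees $\lambda=e^{2\pi\sqrt{-1}\mu_i}\neq1$. Every $p\in D$ has at least one punctured factor with such a monodromy, so $H^*(B\cap U,\LL)=0$ and hence $j_!\LL\simeq Rj_*\LL$, as required.

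The only step needing care is the first: checking that under duality the natural map really corresponds to $j_!\to Rj_*$. Here one should work with the duality for the dual system $\LL^\vee$, whose local monodromies $e^{-2\pi\sqrt{-1}\mu_i}$ are also $\neq1$, so the local argument applies verbatim. Alternatively, one can argue directly on chains: take a tubular neighbourhood $T$ of $D$ and use the long exact sequences relating $H_*(U)$, $H^{BM}_*(U)\cong H_*(X, T)$ and $H_*(T\setminus D,\LL)$. The latter vanishes by the same torus computation, now run on a Mayer–Vietoris cover of $T\setminus D$ by the local pieces $B\cap U$, each of which has zero homology. I expect this identification to be routine, so no real obstacle should arise.
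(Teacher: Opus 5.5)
Your proof is correct. The paper does not prove this lemma itself; it refers to \cite[Lemma 2.4]{xie2025localsystem}, so there is no internal argument to compare against.

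What you give is the standard proof. Poincar\'e duality turns $H_n(U,\LL)\to H_n^{BM}(U,\LL)$ into the forget-supports map $H^{4-n}_c(U,\LL)\to H^{4-n}(U,\LL)$. On the compact $X$ this map is induced by $j_!\LL\to Rj_*\LL$. That morphism is a quasi-isomorphism because each stalk $(Rj_*\LL)_p$, $p\in D$, is the cohomology of $\Delta^*\times\Delta$ or $\Delta^*\times\Delta^*$ with a rank-one system. That system has a circle factor of monodromy $e^{2\pi\sqrt{-1}\mu_i}\neq 1$, so the stalk vanishes by K\"unneth. The hypothesis $0<\mu_i<1$ is used only through $e^{2\pi\sqrt{-1}\mu_i}\neq 1$.

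Two side remarks need tidying, though neither affects the main line.

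First, the detour through $\LL^\vee$ is unnecessary. Cap product with the fundamental class gives $H^k_c(U,\LL)\cong H_{4-k}(U,\LL)$ and $H^k(U,\LL)\cong H^{BM}_{4-k}(U,\LL)$ with the same $\LL$, compatibly with the two natural maps. $\LL^\vee$ only enters if you dualize via universal coefficients instead, and then, as you note, the same local computation applies.

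Second, in your chain-level alternative, $H_*(X,T)$ has no meaning with coefficients in $\LL$, since $\LL$ does not extend over $T$. The correct identification is $H^{BM}_*(U,\LL)\cong H_*(U,T\setminus D;\LL)$. Also, a Mayer--Vietoris proof that $H_*(T\setminus D,\LL)=0$ must check vanishing on all finite intersections of the local pieces, not just on the pieces themselves.
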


When $D$ is not simple normal crossing, we consider the blowup of $\PP^1\times \PP^1$ such that the total transform of $D$ is simple normal crossing. Then we can lift the cycle $\Delta\otimes e$ to the Borel--Moore homology of the complement of the total transform with coefficients in the pullback local system. If the monodromy of $\LL$ around each exceptional divisor is nontrivial, then the above lemma still guarantees that the cycle $\Delta\otimes e$ can be further lifted to the homology group.

The pairing of $\omega$ with $\Delta\otimes e$ gives rise to the period integral
\begin{equation*}
   I = \int_{\Delta} \omega=\int_{\Delta} { dx\wedge dy\over \prod_{i=1}^m f_i(x,y)^{1-\mu_i}}. 
\end{equation*}

 By \cite[Theorem 3.6]{yu2024comm}, iterated integration along $x$ (or $y$) direction reduce the period integral to period integral on punctured $\PP^1$ with form

\begin{equation*}
    \int_{\gamma} {C dz\over \prod_{j=1}^n (z-z_j)^{\nu_j}}.
\end{equation*}

Here the points $z_j\in \PP^1$ are discriminant points of the map $D\to \PP^1$ induced by the projection on $y$ (or $x$) coordinate. The real numbers $\nu_j\in (0,1)$ are certain integral combinations of $\mu_i$ and ${1\over 2}$, see \cite[Section 4]{yu2024comm} for the detailed relation between $\mu_i$ and $\nu_j$. The constant $C$ on $\PP^1$ depends on the choice of $f_i$. 

Let the sections $f_i$ vary and keep the singularity type of $D$. Denote by $\calM$ the GIT moduli space of such $f_i$. Assume on suitable affine open subset $\mathbb{A}$ of $\calM$, we have a flat choice of $\Delta$, and algebraic functions $f_i$. The period integral $I$ becomes a multivalued function on $\mathbb{A}$. By \cite[Proposition 5.3]{yu2024comm}, the map from $\calM$ to moduli of points $(z_j)\subset \PP^1$ is a finite algebraic map. The constant depending on $f_i$ turns out to be algebraic functions on the moduli. So after suitable change of variables, the above double integral can be expressed in terms of Appell--Lauricella hypergeometric functions. Then the two different iterated integral orders give a transformation formula for two Appell--Lauricella hypergeometric functions. 

Following this approach, we can obtain transformation formulas for all tuples of divisors listed in \cite[Proposition~3.3]{yu2024comm}. The choice of different $\Delta$ could possibly give different transformation formulas for the same tuple of divisors. On the other hand, the monodromy representations arising from Deligne--Mostow theory are irreducible, so all the transformation formulas arising from different choices of $\Delta$ are expected to be equivalent under monodromy action.

In the remaining sections, we give some examples of transformation laws of hypergeometric functions arising from double integrals. We divide the examples according to the partitions of $(3,3)$ (see \cite[Proposition~3.2]{yu2024comm}).

\section{Partition $(1,2)+(1,1)+(1,0)$}
\label{section:partition(1,2)(1,1)(1,0)}
In this section, we consider the case where the divisor $D$ decomposes into three components $D_{1},D_{2},D_{3}$ with degrees $(1,2)$, $(1,1)$ and $(1,0)$ respectively.

\subsection{Parametrization and cycle: Figure \ref{figure: (2,1)(1,1)(0,1)segment}}
Let $t,\alpha_{1},\alpha_{2}$ be real numbers satisfying $t<-1$ and $\alpha_{1}<\alpha_{2}<-\frac{(1+t)^{2}}{2}$. Denote by $$\beta_{i}=-\frac{\alpha_{i}(\alpha_{i}+2t)}{2\alpha_{i}+(1+t)^{2}}.$$
We define three divisors under affine coordinates $(x,y)\in \CC^2$ as follows:
\begin{eqnarray*}
    D_{1} &:& y^{2}+2(x+t)y+(1+t)^{2}x=0\\
    D_{2} &:& q_{1}y-q_{2}x-q_{3}=0\\
    D_{3} &:& x=0,
\end{eqnarray*}
where $q_{1},q_{2},q_{3}$ equal $\beta_{2}-\beta_{1}$, $\alpha_{2}-\alpha_{1}$ and $\alpha_{1}\beta_{2}-\alpha_{2}\beta_{1}$ respectively.

Let $a<1$ be a real number. We consider the period integral of the two-form:
\begin{eqnarray*}
    \omega=\frac{dx\wedge dy}{x^{a}(-y^{2}-2(x+t)y-(1+t)^{2}x)^{a}(q_{1}y-q_{2}x-q_{3})^{2-2a}}.
\end{eqnarray*}

Fix $x\in(0,1)$, let $0<y_{-}(x)<y_{+}(x)$ be two real roots of $y^{2}+2(x+t)y+(1+t)^{2}x=0$. The divisor $D_{1}$ can be written as $x=x(y)=-\frac{y^{2}+2ty}{2y+(1+t)^{2}}$. The specified integration cycle $\sigma$ (see Figure \ref{figure: (2,1)(1,1)(0,1)segment}) is defined by
\begin{eqnarray*}
    \sigma &=& \{(x,y)\in \RR^{2}\ | \ 0<x<1\ , \ y_{-}(x)<y<y_{+}(x) \}\\
           &=& \{(x,y)\in \RR^{2}\ | \ 0<y<-2t\ , \ 0<x<x(y)\}.
\end{eqnarray*}

\begin{figure}[H] %H为当前位置, !htb为忽略美学标准, htbp为浮动图形
\centering %图片居中
\includegraphics[width=0.4\textwidth]{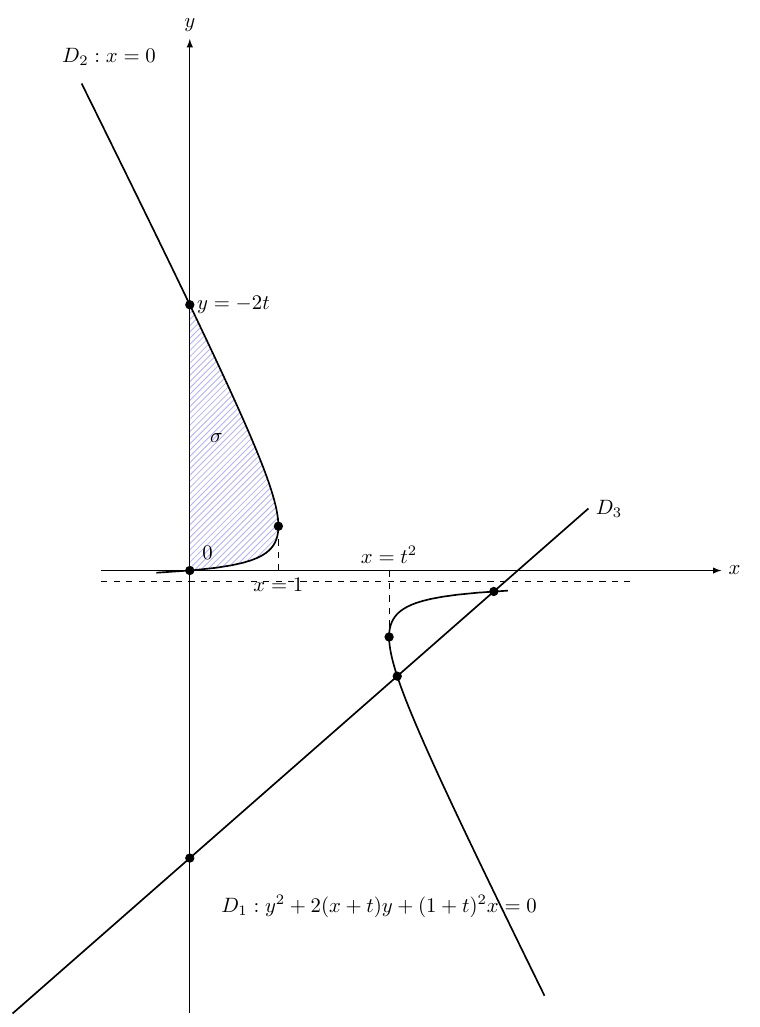} %插入图片, []中设置图片大小, {}中是图片文件名
\caption{$(2,1)+(1,1)+(0,1)$} %最终文档中希望显示的图片标题
\label{figure: (2,1)(1,1)(0,1)segment}
\end{figure}

\subsection{Period integral in Figure \ref{figure: (2,1)(1,1)(0,1)segment}}
\subsubsection{First Integration Order}
We first integrate with respect to $y$.
\begin{eqnarray*}
    \int_{\sigma}\omega=\int_{0}^{1}\frac{dx}{q_{1}^{2-2a}x^{a}}\int_{y_{-}(x)}^{y_{+}(x)}\frac{dy}{(y-y_{-}(x))^{a}(y_{+}(x)-y)^{a}(y-\frac{q_{2}x+q_{3}}{q_{1}})^{2-2a}}.
\end{eqnarray*}
Using Lemma \ref{lemma: 3 poles} (with exponents $a,a,2-2a$ summing to $2$), the inner integral evaluates to:
\begin{eqnarray*}
    I_{y}(x)=\frac{B(1-a,1-a)}{x^{a}(y_{+}(x)-y_{-}(x))^{2a-1}((q_{1}y_{-}(x)-(q_{2}x+q_{3})(q_{1}y_{+}(x)-(q_{2}x+q_{3})))^{1-a}}.
\end{eqnarray*}
Substituting $y_{+}(x)+y_{-}(x)=-2(x+t)$, $y_{+}(x)y_{-}(x)=(1+t)^{2}x$ and $y_{+}(x)-y_{-}(x)=2\sqrt{(1-x)(t^2-x)}$, we have:
\begin{eqnarray*}
    I_{y}(x)=\frac{2^{1-2a}B(1-a,1-a)}{(q_{2}^{2}+2q_{1}q_{2})^{1-a}x^{a}((1-x)(t^{2}-x))^{a-\frac{1}{2}}((\beta_{1}-x)(\beta_{2}-x))^{1-a}}.
\end{eqnarray*}
Substituting $I_{y}(x)$ back into the integral, we obtain:
\begin{equation*}
\int_{\sigma}\omega=\frac{2^{1-2a}B(1-a,1-a)}{(q_{2}^{2}+2q_{1}q_{2})^{1-a}}\int_{0}^{1}\frac{dx}{x^{a}(1-x)^{a-\frac{1}{2}}(t^{2}-x)^{a-\frac{1}{2}}((\beta_{1}-x)(\beta_{2}-x))^{1-a}}.
\end{equation*}
This integral has singularities at $0,1,\infty,t^{2},\beta_{1},\beta_{2}$. By Corollary \ref{corollary: poles + infty}, we can express this in terms of the Appell--Lauricella hypergeometric function $F_{D}^{(3)}$:
\begin{eqnarray*}
    \int_{\sigma}\omega=C_{1}\Fd{(3)}{D}{1-a;a-\frac{1}{2},1-a,1-a}{\frac{5}{2}-2a}{\frac{1}{t^2},\frac{1}{\beta_{1}},\frac{1}{\beta_{2}}}.
\end{eqnarray*}
The constant $C_{1}$ is given by
\begin{eqnarray*}
    C_{1}=B(1-a,1-a)B(1-a,\frac{3}{2}-a)(2t)^{1-2a}((q_{2}^{2}+2q_{1}q_{2})\beta_{1}\beta_{2})^{a-1}.
\end{eqnarray*}

\subsubsection{Second Integration Order}
Integrating with respect to $x$ first, we have
\begin{eqnarray*}
    \int_{\sigma}\omega=\int_{0}^{-2t}\frac{dy}{q_{2}^{2-2a}(2y+(1+t)^{2})^{a}}\int_{0}^{x(y)}\frac{dx}{x^{a}(x(y)-x)^{a}(\frac{q_{1}y-q_{3}}{q_{2}}-x)^{2-2a}}.
\end{eqnarray*}
Using Lemma \ref{lemma: 3 poles} (with exponents $a$, $a$, $2-2a$ summing to $2$), the inner integral evaluates to:
\begin{eqnarray*}
    I_{x}(y)=B(1-a,1-a)((1+t)^{2}+2y)^{a}(x(y))^{2a-1}(q_{1}y-q_{3})^{1-a}(q_{1}y-q_{3}-q_{2}x(y))^{1-a}.
\end{eqnarray*}
Substitute $I_{x}(y)$ back into the integral and note that \[x(y)=-\frac{y^{2}+2ty}{2y+(1+t)^{2}}.\] Simplify the integral and we obtain:
\begin{eqnarray*}
    \int_{\sigma}\omega=\frac{B(1-a,1-a)}{q_{1}^{1-a}(2q_{1}+q_{2})^{1-a}}\int_{0}^{-2t}\frac{dy}{y^{2a-1}(-2t-y)^{2a-1}(y-\frac{q_{3}}{q_{1}})^{1-a}((y-\alpha_{1})(y-\alpha_{2}))^{1-a}}.
\end{eqnarray*}
This integral has singularities $0,-2t,\infty,\frac{q_{3}}{q_{1}},\alpha_{1},\alpha_{2}$. By Corollary \ref{corollary: poles + infty}, we can express this in terms of the Appell--Lauricella hypergeometric function $F_{D}^{(3)}$:
\begin{eqnarray*}
    \int_{\sigma}\omega=C_{2}\Fd{(3)}{D}{2-2a;1-a,1-a,1-a}{4-4a}{-\frac{2q_{1}t}{q_{3}},-\frac{2t}{\alpha_{1}},-\frac{2t}{\alpha_{2}}}.
\end{eqnarray*}
The constant $C_{2}$ is given by
\begin{eqnarray*}
    C_{2}=\frac{B(1-a,1-a)B(2-2a,2-2a)(-2t)^{3-4a}}{(-q_{3}(2q_{1}+q_{2})\alpha_{1}\alpha_{2})^{1-a}}.
\end{eqnarray*}

\subsubsection{Transformation Formula}
Comparing two results of $\int_{\sigma}\omega$, we obtain the following transformation formula:
\begin{thm}
\label{thm:transformula(1,2)(1,1)(1,0)}
    Let $t,\alpha_{1},\alpha_{2},a$ be real numbers such that $t<-1$, $\alpha_{1}<\alpha_{2}<-\frac{(1+t)^{2}}{2}$ and $a<1$. Let $\beta_{i}$ be $-\frac{\alpha_{i}(\alpha_{i}+2t)}{2\alpha_{i}+(1+t)^{2}}$, $i=1,2$. We have the following transformation formula relating $F_{D}^{(3)}$:
    \begin{eqnarray*}
        &&\Fd{(3)}{D}{2-2a;1-a,1-a,1-a}{4-4a}{\frac{2(\beta_{2}-\beta_{1})t}{\alpha_{2}\beta_{1}-\alpha_{1}\beta_{2}},-\frac{2t}{\alpha_{1}},-\frac{2t}{\alpha_{2}}}\\
        &=&C\cdot \Fd{(3)}{D}{1-a;a-\frac{1}{2},1-a,1-a}{\frac{5}{2}-2a}{\frac{1}{t^2},\frac{1}{\beta_{1}},\frac{1}{\beta_{2}}},
    \end{eqnarray*}
    where $C=\frac{C_{1}}{C_{2}}=(\frac{t^{2}(\alpha_{1}+2t)(\alpha_{2}+2t)}{\alpha_{1}\alpha_{2}(1-t)^2})^{a-1}$.
\end{thm}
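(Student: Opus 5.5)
The plan is to evaluate the period integral $\int_\sigma\omega$ over the region $\sigma$ of Figure \ref{figure: (2,1)(1,1)(0,1)segment} in both orders, as set up above, and compare. Integrating first in $y$ gives $C_1\,F_D^{(3)}(1-a;a-\tfrac12,1-a,1-a;\tfrac52-2a;t^{-2},\beta_1^{-1},\beta_2^{-1})$. Integrating first in $x$ gives $C_2\,F_D^{(3)}(2-2a;1-a,1-a,1-a;4-4a;-2q_1t/q_3,-2t/\alpha_1,-2t/\alpha_2)$. Fubini's theorem applies because $\omega$ is a positive, integrable function on $\sigma$: the exponents $a,a,2-2a,1-a,\dots$ are all less than $1$ at the boundary pieces met by $\sigma$. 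So the two expressions are equal, and the theorem follows once three things are checked.

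\textbf{Step 1: hypotheses.} We must verify the hypotheses of Lemma \ref{lemma: 3 poles} and Corollary \ref{corollary: poles + infty} in the stated parameter range.
\begin{itemize}
\item In each inner integral, the third pole must lie outside the interval of integration. For fixed $x\in(0,1)$, the line $D_2$ must not meet the segment $y_-(x)<y<y_+(x)$.
\item For the outer integrals, the points $t^2,\beta_1,\beta_2$ must lie outside $(0,1)$, and $q_3/q_1,\alpha_1,\alpha_2$ must lie outside $(0,-2t)$. This is where $t<-1$ and $\alpha_1<\alpha_2<-(1+t)^2/2$ are used.
\item The two pole exponents at the interval endpoints must be $<1$: these are $a,\ a-\tfrac12$ in the first order and $2a-1,\ 2a-1$ in the second. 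Both hold since $a<1$.
\end{itemize}
By construction, $D_2$ passes through $(\alpha_i,\beta_i)$ with $\beta_i=x(\alpha_i)$. Hence the line meets $D_1$ exactly at $y=\alpha_i$, which lies outside $[0,-2t]$. This gives the needed disjointness. It also produces the factorization $q_1y-q_3-q_2x(y)\propto (y-\alpha_1)(y-\alpha_2)/(2y+(1+t)^2)$, which is used in the second order.

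\textbf{Step 2: first argument.} Rewrite $-2q_1t/q_3$ as $\frac{2(\beta_2-\beta_1)t}{\alpha_2\beta_1-\alpha_1\beta_2}$. This is immediate from $q_1=\beta_2-\beta_1$ and $q_3=\alpha_1\beta_2-\alpha_2\beta_1$.

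\textbf{Step 3: the constant.} We must simplify
\[
\frac{C_1}{C_2}=\frac{B(1-a,\frac32-a)\,(2t)^{1-2a}\,\bigl((q_2^2+2q_1q_2)\beta_1\beta_2\bigr)^{a-1}}{B(2-2a,2-2a)\,(-2t)^{3-4a}\,\bigl(-q_3(2q_1+q_2)\alpha_1\alpha_2\bigr)^{a-1}}.
\]
For the Beta factors, the duplication formula $\Gamma(2z)=\pi^{-1/2}2^{2z-1}\Gamma(z)\Gamma(z+\tfrac12)$ with $z=1-a$ gives
\[
\frac{B(1-a,\frac32-a)}{B(2-2a,2-2a)}=\frac{\Gamma(1-a)\Gamma(\frac32-a)\,\Gamma(4-4a)}{\Gamma(\frac52-2a)\,\Gamma(2-2a)^2}.
\]
Applying duplication again to $\Gamma(4-4a)=\Gamma(2(2-2a))$ and to $\Gamma(2-2a)=\Gamma(2(1-a))$ should reduce this ratio to a power of $2$ times a power of $\pm t$.

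\textbf{Main obstacle: the algebraic identity.} The remaining task is to show
\[
\frac{q_2\,\beta_1\beta_2}{-q_3\,\alpha_1\alpha_2}
\]
is, up to powers of $2$ and $t$, equal to $\dfrac{t^2(\alpha_1+2t)(\alpha_2+2t)}{\alpha_1\alpha_2(1-t)^2}$. Note that the factor $q_2+2q_1$ appears in both $C_1$ and $C_2$ and cancels. To do this, substitute $\beta_i=-\alpha_i(\alpha_i+2t)/(2\alpha_i+(1+t)^2)$ and compute
\[
q_3=\alpha_1\beta_2-\alpha_2\beta_1 .
\]
Over the common denominator $\prod_i\bigl(2\alpha_i+(1+t)^2\bigr)$, this should factor as $(\alpha_2-\alpha_1)$ times a simple expression, presumably involving $(1-t)^2$. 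The ratio then collapses to the stated $C$.

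The sign and absolute-value bookkeeping (for instance $(-2t)$ versus $(2t)$, and the sign of $q_3$) needs to be tracked so that all real powers are taken of positive quantities. I would check the final identity numerically at one sample point to be safe.
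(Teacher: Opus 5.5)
Your proposal is correct and follows the paper's proof: evaluate $\int_\sigma\omega$ in both orders, using Lemma \ref{lemma: 3 poles} for the inner integrals and Corollary \ref{corollary: poles + infty} for the outer ones, and take $C=C_1/C_2$. Your planned simplification of the constant, which the paper asserts without detail, does go through: $q_3=-\frac{(1-t)^2\alpha_1\alpha_2(\alpha_2-\alpha_1)}{(2\alpha_1+(1+t)^2)(2\alpha_2+(1+t)^2)}$, duplication gives the Beta ratio $2^{2-2a}$, and with $(-2t)^{1-2a}$ in place of the paper's $(2t)^{1-2a}$ in $C_1$, the ratio $C_1/C_2$ reduces exactly to the stated $C$.
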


\section{Partition $(1,2)+(1,0)+(1,0)+(0,1)$}
\label{section: (1,2)+(1,0)+(1,0)}

In this section, we give an example of transformation formulas of hypergeometric functions arising from double integrals corresponding to divisors $D_1$, $D_2$, $D_3$, $D_{4}$ defined with degrees $(1,2)$, $(1,0)$, $(0,1)$, $(1,0)$ respectively.

\subsection{Period integral in Figure \ref{figure: (1,2)(1,0)(1,0)(0,1)segment}}
%\subsection{Parametrization and cycle: Figure \ref{figure: (1,2)(1,0)(1,0)(0,1)segment}}

Let $a$, $b$ be two real numbers satisfying $a$, $b>0$ and $\frac{1}{2}<a+b<1$. Consider two parameters $t_1$ and $t_2$ satisfying $0<t_1<1$ and $0<t_2^2<t_1^2$. Let the divisors be given by
\begin{eqnarray*}
    D_1 &:& x - y^2 = 0 \\
    D_2 &:& x - t_2^2 = 0 \\
    D_3 &:& y - t_1 = 0 \\
    D_4 &:& x - 1 = 0
\end{eqnarray*}

Let $\omega$ be a two-form
\[
    \omega=\frac{dx\wedge dy}{(x-y^2)^{a+b}(1-x)^{1-b}(t_{2}^{2}-x)^{1-a}(y-t_1)^{2-2a-2b}}.
\]

\begin{figure}[H] %H为当前位置, !htb为忽略美学标准, htbp为浮动图形
\centering %图片居中
\includegraphics[width=0.25\textwidth]{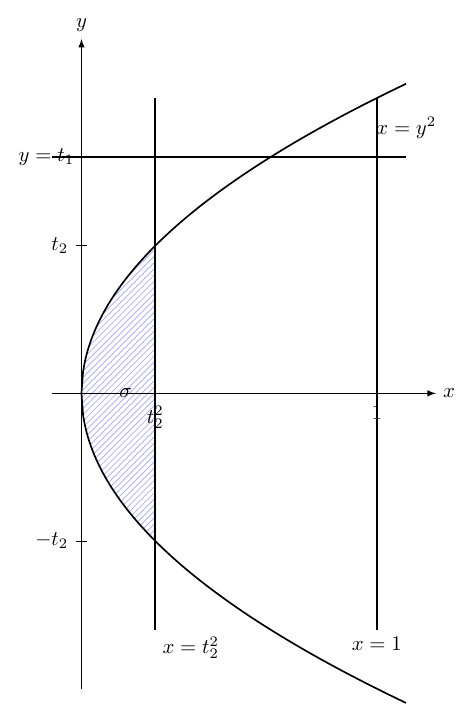} %插入图片, []中设置图片大小, {}中是图片文件名
\caption{$(1,2)+(1,0)+(1,0)+(0,1)$, Segment Region} %最终文档中希望显示的图片标题
\label{figure: (1,2)(1,0)(1,0)(0,1)segment}
\end{figure}

Let $\sigma$ be a region (see Figure \ref{figure: (1,2)(1,0)(1,0)(0,1)segment}) in $\RR^{2}$ defined by
\begin{eqnarray*}
    \sigma &=&\{(x,y)\in \RR^2|\  0<x<t_{2}^2,-\sqrt{x}<y<\sqrt{x}\}\\
    &=&\{(x,y)\in \RR^2|\ -t_{2}<y<t_{2}, y^2<x<t_{2}^{2} \}
\end{eqnarray*}

\subsubsection{First Integration Order}
First integrate with respect to $y$.
\begin{eqnarray*}
    \int_{\sigma}\omega=\int_{0}^{t_{2}^{2}}\frac{dx}{(1-x)^{1-b}(t_{2}^{2}-x)^{1-a}}\int_{-\sqrt{x}}^{\sqrt{x}}\frac{dy}{(y+\sqrt{x})^{a+b}(\sqrt{x}-y)^{a+b}(t_{1}-y)^{2-2a-2b}}.
\end{eqnarray*}
Use Lemma \ref{lemma: 3 poles} for the inner integral, the integral becomes:
\begin{eqnarray*}
    2^{1-2a-2b}B(1-a-b,1-a-b)\int_{0}^{t_{2}^{2}}\frac{dx}{x^{a+b-\frac{1}{2}}(t_{2}^{2}-x)^{1-a}(t_{1}^{2}-x)^{1-a-b}(1-x)^{1-b}}.
\end{eqnarray*}
This integral has singularities at $0,t_{2}^{2},\infty,t_{1}^{2},1$. By Lemma \ref{corollary: poles + infty}, we obtain the following representation of the Appell function $F_{1}$:
\begin{eqnarray*}
    C_{1}\cdot \pFq{}{1}{\frac{3}{2}-a-b;1-a-b,1-b}{\frac{3}{2}-b}{(\frac{t_{2}}{t_{1}})^{2},t_{2}^{2}},
\end{eqnarray*}
where 
\begin{eqnarray*}
    C_{1}=2^{1-2a-2b}B(1-a-b,1-a-b)B(\frac{3}{2}-a-b,a)(t_{2})^{1-2b}(t_{1})^{2a+2b-2}.
\end{eqnarray*}

\subsubsection{Second Integration Order}
Integrate with respect to $x$.
\begin{eqnarray*}
    \int_{\sigma}\omega=\int_{-t_{2}}^{t_{2}}\frac{dy}{(t_{1}-y)^{2-2a-2b}}\int_{y^{2}}^{t_{2}^{2}}\frac{dx}{(x-y^{2})^{a+b}(t_{2}^{2}-x)^{1-a}(1-x)^{1-b}}.
\end{eqnarray*}
Using Lemma \ref{lemma: 3 poles} for the inner integral, the integral becomes:
\begin{eqnarray*}
    B(1-a-b,a)(1-t_{2}^{2})^{a+b-1}\int_{-t_{2}}^{t_{2}}\frac{dy}{(y+t_{2})^{b}(t_{2}-y)^{b}((1-y)(1+y))^{a}(t_{1}-y)^{2-2a-2b}}.
\end{eqnarray*}
This integral has singularities at $-t_{2},t_{2},1,-1,t_{1}$. By Lemma \ref{lemma: 4 poles}, we obtain the following representation of the Appell function $F_{1}$:
\begin{eqnarray*}
    C_{2}\cdot \pFq{}{1}{1-b;a,2-2a-2b}{2-2b}{-\frac{4t_{2}}{(1-t_{2})^{2}},\frac{2t_{2}(1-t_{1})}{(1-t_{2})(t_{1}+t_{2})}},
\end{eqnarray*}
where
\begin{eqnarray*}
    C_{2}=B(1-a-b,a)B(1-b,1-b)(2t_{2})^{1-2b}(1-t_{2})^{2b-2}(t_{1}+t_{2})^{2a+2b-1}.
\end{eqnarray*}

\subsubsection{Transformation Formula}
Setting $s=\frac{t_{2}}{t_{1}}$ and $t=t_{2}$, we obtain the following transformation formula relating Appell hypergeometric functions.

\begin{thm}
\label{thm:transformula1(1,2)(1,0)(1,0)(0,1)}
For real numbers $a$, $b$, $t_1$, $t_2$ in the following range $a>0, a+b<1, 0<t<1$ and $t<s<1$, we have
\begin{eqnarray*}
    &&\pFq{}{1}{\frac{3}{2}-a-b;1-a-b,1-b}{\frac{3}{2}-b}{s^{2},t^2}\\
    &=&C\cdot \pFq{}{1}{1-b;a,2-2a-2b}{2-2b}{-\frac{4t}{(1-t)^{2}},\frac{2(s-t)}{(1-t)(1+s)}},
\end{eqnarray*}
where $C=(1-t)^{2b-2}(1+s)^{2a+2b-2}$.
\end{thm}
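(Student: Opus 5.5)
The identity follows by computing $\int_\sigma\omega$ for the segment region $\sigma$ of Figure \ref{figure: (1,2)(1,0)(1,0)(0,1)segment} in both integration orders and equating $C_1F_1(\dots)=C_2F_1(\dots)$, as derived above. Before that, I will make sure both iterated integrals are legitimate. On $\sigma$ we have $0<x<t_2^2<1$ and $|y|<t_2<t_1$, so the factors $x-y^2$, $1-x$, $t_2^2-x$, $t_1-y$ are positive in the interior. The integrand is singular only along the boundary curves $x=y^2$ and $x=t_2^2$, with exponents $a+b<1$ and $1-a<1$. The corner where $x=y^2$ meets $x=t_2^2$ behaves like a product of integrable powers. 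Hence $\omega$ is absolutely integrable on $\sigma$, and Fubini--Tonelli applies. This requires $a+b<1$ and $a>0$; the remaining positivity conditions of Lemma \ref{lemma: 3 poles} and Corollary \ref{corollary: poles + infty} then need $b<1$, which holds.

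\textbf{First order.} The inner $y$-integral has poles $\pm\sqrt{x}$ and $t_1$, with exponents $a+b,\,a+b,\,2-2a-2b$ summing to $2$. Lemma \ref{lemma: 3 poles} gives the factor $(2\sqrt x)^{1-2a-2b}(t_1^2-x)^{a+b-1}$. Corollary \ref{corollary: poles + infty}, applied with $x_1=0$, $x_2=t_2^2$, $x_4=t_1^2$, $x_5=1$ and the pole at infinity, then gives $C_1\,F_1(\tfrac32-a-b;1-a-b,1-b;\tfrac32-b;t_2^2/t_1^2,t_2^2)$. Here I will recheck the Beta factor and the powers of $t_1,t_2$ as bookkeeping.

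\textbf{Second order.} The inner $x$-integral runs over $(y^2,t_2^2)$ with poles $y^2$, $t_2^2$, $1$ and exponents $a+b,\,1-a,\,1-b$, again summing to $2$. Lemma \ref{lemma: 3 poles} produces $(t_2^2-y^2)^{-b}(1-y^2)^{-a}(1-t_2^2)^{a+b-1}$. The outer integral over $(-t_2,t_2)$ has poles $\pm t_2$, $\pm1$ and $t_1$. I will apply Lemma \ref{lemma: 4 poles} with $x_1=-t_2$, $x_2=t_2$, $x_3=-1$, $x_4=1$, $x_5=t_1$. The hypotheses hold: $-1,1,t_1\notin(-t_2,t_2)$ and the exponents sum to $2$. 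The cross-ratios are
\[
R_4=\frac{2\cdot 2t_2}{(1+t_2)(t_2-1)}\cdot(-1)\quad\text{up to sign conventions}.
\]
These must be computed carefully so that they come out to $-4t_2/(1-t_2)^2$ and $2t_2(1-t_1)/((1-t_2)(t_1+t_2))$.

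\textbf{Main obstacle.} The delicate step is matching the constant $C=C_1/C_2$. The Beta factors must cancel:
\[
B(1-a-b,1-a-b)\,B(\tfrac32-a-b,a)\,2^{1-2a-2b}=B(1-a-b,a)\,B(1-b,1-b)\,2^{1-2b}\cdot(\text{power of }2).
\]
Expanding into Gamma functions, this reduces via the Legendre duplication formula, applied to $\Gamma(2-2a-2b)$ and $\Gamma(2-2b)$, to an identity between $\Gamma$-values. Since $\Gamma(1-a-b)\Gamma(a)/\Gamma(1-b)$ appears on both sides, the identity should close after duplication.

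With the Beta factors cancelled, the remaining algebraic factors are $t_2^{1-2b}t_1^{2a+2b-2}$ against $(2t_2)^{1-2b}(1-t_2)^{2b-2}(t_1+t_2)^{2a+2b-1}(1-t_2^2)^{a+b-1}$. Substituting $s=t_2/t_1$ and $t=t_2$ should collapse these to $(1-t)^{2b-2}(1+s)^{2a+2b-2}$, which is where I expect errors. Any residual factor such as $(1-t^2)^{a+b-1}$ or $t^{\,\cdot}$ will have to be tracked through the lemma's constant. If it does not cancel, I would suspect a misprint in $C_2$ rather than in the method, and recompute the constant of Lemma \ref{lemma: 4 poles} directly from the M\"obius substitution. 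Finally, the parameter range $t<s<1$ follows from $t_2<t_1<1$, and the range extends by analytic continuation wherever both sides are defined.
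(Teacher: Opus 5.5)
Your route is the paper's own: Fubini on the segment region, then Lemma~\ref{lemma: 3 poles} followed by Corollary~\ref{corollary: poles + infty} in one order, and Lemma~\ref{lemma: 3 poles} followed by Lemma~\ref{lemma: 4 poles} in the other. Your first-order computation and both inner integrals are correct. The Beta factors do close by duplication, since
\[
\frac{B(1-a-b,a)\,B(1-b,1-b)}{B(1-a-b,1-a-b)\,B(\tfrac32-a-b,a)}=2^{-2a}.
\]
However, two steps, as you set them up, would not yield the stated identity.

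\textbf{First, the choice of $x_3$.} In Lemma~\ref{lemma: 4 poles} you take $x_3=-1$, $x_4=1$. This gives $R_4=\frac{4t_2}{(1+t_2)^2}$ and $R_5=\frac{2t_2(1+t_1)}{(1+t_2)(t_1+t_2)}$. That is a different $F_1$, which reaches the stated one only after an extra Pfaff-type transformation
\[
F_1(\alpha;\beta,\beta';\gamma;x,y)=(1-x)^{-\alpha}F_1\bigl(\alpha;\gamma-\beta-\beta',\beta';\gamma;\tfrac{x}{x-1},\tfrac{y-x}{1-x}\bigr).
\]
To land directly on the theorem, take $x_3=1$, $x_4=-1$, $x_5=t_1$, so that $\mu_3=\mu_4=a$ and $\mu_5=2-2a-2b$. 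Then $R_4=-\frac{4t_2}{(1-t_2)^2}$ and $R_5=\frac{2t_2(1-t_1)}{(1-t_2)(t_1+t_2)}$, which become the theorem's arguments after setting $s=t_2/t_1$, $t=t_2$.

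\textbf{Second, the constant.} Since $C_1F^{(1)}=C_2F^{(2)}$ and the $C_1$-function sits on the left of the theorem, you need $C=C_2/C_1$, not $C_1/C_2$. With $x_3=1$, the constant of Lemma~\ref{lemma: 4 poles} is
\[
(1-t_2)^{b-1}(2t_2)^{1-2b}(1+t_2)^{1-a-b}(1-t_2)^{-a}(t_1+t_2)^{2a+2b-2}B(1-b,1-b).
\]
Multiplying by $B(1-a-b,a)(1-t_2^2)^{a+b-1}$ gives
\[
C_2=B(1-a-b,a)\,B(1-b,1-b)\,(2t_2)^{1-2b}(1-t_2)^{2b-2}(t_1+t_2)^{2a+2b-2}.
\]
So $(1-t_2^2)^{a+b-1}$ is already absorbed into $(1-t_2)^{2b-2}$ and must not be listed a second time. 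Also, the exponent of $t_1+t_2$ is $2a+2b-2$, not $2a+2b-1$; the printed $C_2$ does contain the misprint you suspected. With these corrections,
\[
\frac{C_2}{C_1}=2^{2a}\cdot 2^{-2a}\,(1-t)^{2b-2}\Bigl(\frac{t_1+t_2}{t_1}\Bigr)^{2a+2b-2}=(1-t)^{2b-2}(1+s)^{2a+2b-2}.
\]
No analytic continuation is needed for the range: $(t,s)\mapsto(t_2,t_1)=(t,t/s)$ maps $0<t<s<1$ bijectively onto $0<t_2<t_1<1$.
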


\begin{rmk}
    The transformation formula in Theorem \ref{thm:transformula(1,2)(1,1)(1,0)} can degenerate to a special case of the formula in Theorem \ref{thm:transformula1(1,2)(1,0)(1,0)(0,1)}.

    If $\alpha_{1}<-t(t+1)<\alpha_{2}$ and let $\beta_{1}\rightarrow \beta_{2}$, then divisor $D_{2}$ degenerates to $x=\beta_{1}$ union $y=\infty$. This happens in the compactification $\PP^{1}\times \PP^{1}$. Actually, in the projective coordinate, $D_{2}$ is defined by $q_{1}yz_{1}-q_{2}xz_{2}-q_{3}z_{1}z_{2}=0$. It degenerates to $(x-\beta_{1})z_{2}=0$ as $\beta_{1}\rightarrow \beta_{2}$.

    After a coordinate transformation, the form
    \[\omega=\frac{dx\wedge dy}{x^{a}(-y^{2}-2(x+t)y-(1+t)^{2}x)^{a}(q_{1}y-q_{2}x-q_{3})^{2-2a}}\] 
    degenerates to 
    \[\omega=\frac{dx\wedge dy}{(x-y^2)^{a}(1-x)^{2-2a}(t_{2}^{2}-x)^{a}(y-t_1)^{2-2a}}.\]
    This form gives the special case of $2a+b=1$ in Theorem \ref{thm:transformula1(1,2)(1,0)(1,0)(0,1)}.
\end{rmk}

\subsection{Period integral in Figure \ref{figure: (1,2)(1,0)(1,0)(0,1)sector}}

Let $\sigma$ be another region (see Figure \ref{figure: (1,2)(1,0)(1,0)(0,1)sector}) in $\RR^2$ defined by 
\begin{eqnarray*}
    \sigma &=&\{(x,y)\in \RR^2|\  t_1^2<x<1,t_1<y<\sqrt{x}\}\\
    &=&\{(x,y)\in \RR^2|\ t_1<y<1, y^2<x<1 \}
\end{eqnarray*}
and $\omega$ be a two-form
\[
    \omega=\frac{dx\wedge dy}{(x-y^2)^{a+b}(1-x)^{1-b}(x-t_2^{2})^{1-a}(y-t_1)^{2-2a-2b}}.
\]
\begin{figure}[H] %H为当前位置, !htb为忽略美学标准, htbp为浮动图形
\centering %图片居中
\includegraphics[width=0.3\textwidth]{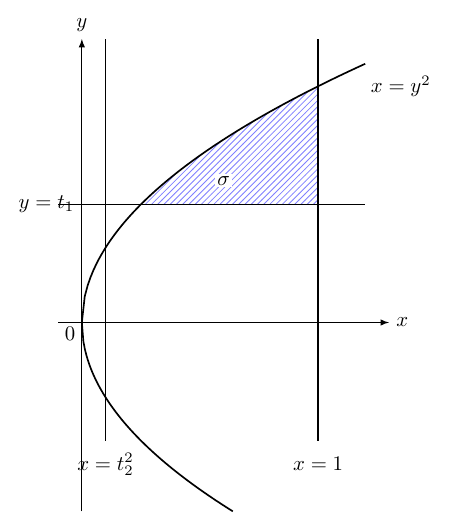} %插入图片, []中设置图片大小, {}中是图片文件名
\caption{$(1,2)+(1,0)+(1,0)+(0,1)$, Sector Region} %最终文档中希望显示的图片标题
\label{figure: (1,2)(1,0)(1,0)(0,1)sector}
\end{figure}
Next we give the transformation relation based on Figure \ref{figure: (1,2)(1,0)(1,0)(0,1)sector}.

Integrating with respect to $y$.
\begin{eqnarray*}
    \int_\sigma \omega&=&\int_{t_1^2}^{1}\frac{dx}{(1-x)^{1-b}(x-t_2^2)^{1-a}} \int_{t_1}^{\sqrt{x}} \frac{dy}{(y-t_1)^{2-2a-2b}(x-y^2)^{a+b}}
\end{eqnarray*}
Using Lemma \ref{lemma: 3 poles} for the inner integral, the integral becomes:
\begin{eqnarray*}
2^{1-2a-2b}B(2a+2b-1,1-a-b)\int_{t_{1}^{2}}^{1}\frac{dx}{(x-t_{1}^{2})^{1-a-b}(1-x)^{1-b}x^{a+b-\frac{1}{2}}(x-t_{2}^{2})^{1-a}}.
\end{eqnarray*}
This integral has singularities at $t_{1}^{2},1,\infty,0,t_{2}^{2}$. By Corollary \ref{corollary: poles + infty}, we obtain the following representation of the Appell function $F_{1}$:
\begin{eqnarray*}
    C_{1}\cdot \pFq{}{1}{a+b;a+b-\frac{1}{2},1-a}{a+2b}{\frac{t_{1}^{2}-1}{t_{1}^{2}},\frac{t_{1}^{2}-1}{t_{1}^{2}-t_{2}^{2}}},
\end{eqnarray*}
where
\begin{eqnarray*}
    C_{1}=2^{1-2a-2b}B(2a+2b-1,1-a-b)B(a+b,b)t_{1}^{1-2a-2b}(1-t_{1}^{2})^{a+2b-1}(t_{1}^{2}-t_{2}^{2})^{a-1}.
\end{eqnarray*}

Consider the second integration order. First integrate with respect to $x$.
\begin{eqnarray*}
    \int_\sigma \omega&=&\int_{t_1}^{1}\frac{dy}{(y-t_1)^{2-2a-2b}} \int_{y^2}^{1} \frac{dx}{(x-y^2)^{a+b}(1-x)^{1-b}(x-t_2^2)^{1-a}}.
\end{eqnarray*}
Using Lemma \ref{lemma: 3 poles} for the inner integral, the integral becomes:
\begin{eqnarray*}
\frac{B(1-a-b,b)}{(1-t_{2}^{2})^{1-a-b}}\int_{t_{1}}^{1}\frac{dy}{(y-t_{1})^{2-2a-2b}(1-y)^{a}(1+y)^{a}(y^{2}-t_{2}^{2})^{b}}.
\end{eqnarray*}
This integral has singularities at $t_{1},1,-1,t_{2},-t_{2}$. By Lemma \ref{lemma: 4 poles}, we obtain the following representation of the Appell function $F_{1}$:
\begin{eqnarray*}
    C_{2}\cdot \pFq{}{1}{2a+2b-1;b,b}{a+2b}{\frac{(1-t_{1})(1+t_{2})}{2(t_{2}-t_{1})},-\frac{(1-t_{1})(1-t_{2})}{2(t_{2}+t_{1})}},
\end{eqnarray*}
where
\begin{eqnarray*}
    C_{2}=2^{1-2a-2b}B(1-a-b,b)B(2a+2b-1,1-a)(1-t_{1}^{2})^{a+2b-1}(t_{1}^{2}-t_{2}^{2})^{-b}.
\end{eqnarray*}

Comparing the two representations of $\int_{\sigma}\omega$, we obtain the following transformation formula of the Appell hypergeometric functions:
\begin{thm}
\label{thm:sector region for (1,2)+(1,0)+(1,0)+(0,1)}
For real numbers $a$, $b$, $t_1$, $t_2$ in the following range $b>0, \frac{1}{2}<a+b<1, 0<t_1<1$ and $0<t_2<t_1$, we have
\begin{eqnarray*}
    &&t_1^{2a+2b-1}(1-t_2^2)^{a+b-1}\pFq{}{1}{2a+2b-1;b,b}{a+2b}{\frac{(1-t_1)(t_2+1)}{2(t_2-t_1)},\frac{(1-t_1)(t_2-1)}{2(t_2+t_1)}}\\
    &=&(t_1^2-t_2^2)^{a+b-1}\pFq{}{1}{a+b;a+b-\frac{1}{2},1-a}{a+2b}{\frac{t_1^2-1}{t_1^2},\frac{t_1^2-1}{t_1^2-t_2^2}}.
\end{eqnarray*}
\end{thm}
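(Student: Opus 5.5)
The plan is to equate the two evaluations of $\int_\sigma\omega$ over the sector region $\sigma=\{t_1<y<1,\ y^2<x<1\}$ computed just above, namely $C_1\cdot F_1(\cdots)$ from integrating in $y$ first and $C_2\cdot F_1(\cdots)$ from integrating in $x$ first, and then simplify the ratio $C_1/C_2$. Fubini's theorem justifies the equality of the iterated integrals. In the stated range ($b>0$, $\tfrac12<a+b<1$, $0<t_2<t_1<1$) every exponent of $\omega$ lies below $1$ along the boundary of $\sigma$: $a+b<1$ at $x=y^2$; $1-b<1$ at $x=1$; $2-2a-2b<1$ at $y=t_1$; and $x-t_2^2>t_1^2-t_2^2>0$ on $\sigma$. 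The only delicate spot is the corner $(1,1)$, where $D_1$ and $D_4$ meet tangentially. Near that corner I would bound the integrand by the product of the one-variable integrability conditions, which the inner integrals already verify: the inner $x$-integral gives $(1-y)^{-a}(1-y^2)^{-a}$-type behaviour, hence integrable since $a<1$ (indeed $a<1-b<1$). So $\omega$ is absolutely integrable on $\sigma$ and Fubini applies.

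Next I would check the hypotheses of the lemmas as they were used. For the inner $y$-integral, I would apply Lemma \ref{lemma: 3 poles} with poles $t_1,\sqrt x,-\sqrt x$ and exponents $2-2a-2b,\ a+b,\ a+b$, which sum to $2$ with the first two less than $1$. For the outer $x$-integral, I would apply Corollary \ref{corollary: poles + infty} on $(t_1^2,1)$ with remaining poles $0,t_2^2$ outside that interval; the exponents sum to $2$ once $\infty$ is included. In the second order, the inner $x$-integral uses Lemma \ref{lemma: 3 poles} with poles $y^2,1,t_2^2$. The outer $y$-integral uses Lemma \ref{lemma: 4 poles} on $(t_1,1)$ with $x_3$ chosen among $-1,\pm t_2$, giving the two arguments displayed. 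I would also verify that all arguments of $F_1$ lie in $\CC\setminus[1,\infty)$, so the Euler integral representation is valid: $(t_1^2-1)/t_1^2<0$, $(t_1^2-1)/(t_1^2-t_2^2)<0$, and both arguments on the left side are negative because $t_2<t_1<1$.

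The main obstacle is the constant bookkeeping, which reduces to a Beta-function identity. Dividing gives
\[
\frac{C_1}{C_2}=\frac{B(a+b,b)\,B(1-a-b,b)^{-1}\,B(2a+2b-1,1-a-b)}{B(2a+2b-1,1-a)}\; t_1^{1-2a-2b}(t_1^2-t_2^2)^{a+b-1}.
\]
Expanding every Beta function in Gamma functions, the Gamma factors cancel exactly:
\[
\frac{\Gamma(a+b)\Gamma(b)}{\Gamma(a+2b)}\cdot\frac{\Gamma(1-a)}{\Gamma(1-a-b)\Gamma(b)}\cdot\frac{\Gamma(2a+2b-1)\Gamma(1-a-b)}{\Gamma(a+b)}\cdot\frac{\Gamma(a+b)}{\Gamma(2a+2b-1)\Gamma(1-a)}=\frac{\Gamma(a+b)}{\Gamma(a+2b)}.
\]
This remaining factor must be matched by the normalizations implicit in the $F_1$ integral representations, since each lemma already includes its own Beta factor. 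So I would carefully recompute $C_1$ and $C_2$ directly from Corollary \ref{corollary: poles + infty} and Lemma \ref{lemma: 4 poles}, in particular the powers of $2$, of $(1-t_1^2)$ and of $(1-t_2^2)$, and confirm that the numerical constants cancel. The $(1-t_2^2)^{a+b-1}$ prefactor from the inner $x$-integral then moves to the left side, and the leftover power of $t_1$ appears as $t_1^{2a+2b-1}$. Rearranging gives the stated identity, with the sign in the second argument rewritten as $\frac{(1-t_1)(t_2-1)}{2(t_2+t_1)}$.
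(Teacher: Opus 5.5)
Your route is the same as the paper's: compute $\int_\sigma\omega$ in both orders with Lemma \ref{lemma: 3 poles}, Corollary \ref{corollary: poles + infty} and Lemma \ref{lemma: 4 poles}, then compare $C_1$ and $C_2$. The gap is in the final constant comparison, which is exactly the step that yields the theorem.

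You expanded $B(2a+2b-1,1-a)^{-1}$ as $\Gamma(a+b)/(\Gamma(2a+2b-1)\Gamma(1-a))$. Since $(2a+2b-1)+(1-a)=a+2b$, it is actually $\Gamma(a+2b)/(\Gamma(2a+2b-1)\Gamma(1-a))$. With this corrected, the Beta factors cancel completely:
\[
B(2a+2b-1,1-a-b)\,B(a+b,b)=\frac{\Gamma(2a+2b-1)\Gamma(1-a-b)\Gamma(b)}{\Gamma(a+2b)}=B(1-a-b,b)\,B(2a+2b-1,1-a).
\]
The common factor $2^{1-2a-2b}(1-t_1^2)^{a+2b-1}$ also cancels. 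You are left with $t_1^{1-2a-2b}(t_1^2-t_2^2)^{a-1}$ times the $y$-first $F_1$ equal to $(1-t_2^2)^{a+b-1}(t_1^2-t_2^2)^{-b}$ times the $x$-first $F_1$. Multiplying by $t_1^{2a+2b-1}(t_1^2-t_2^2)^{b}$ gives the statement.

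Your proposed remedy is not a valid step. You claim the leftover $\Gamma(a+b)/\Gamma(a+2b)$ ``must be matched by the normalizations implicit in the $F_1$ integral representations.'' But the factors $B(1-\mu_1,1-\mu_2)$ supplied by the lemmas already are the Euler normalizations $\Gamma(\alpha)\Gamma(c-\alpha)/\Gamma(c)$, so nothing remains to absorb a constant. In fact, letting $t_1\to1^-$ sends all four $F_1$ arguments to $0$ and both sides of the theorem to $(1-t_2^2)^{a+b-1}$. So a genuine residual factor $\Gamma(a+b)/\Gamma(a+2b)$ would make the identity false in general; this limit is a useful check on the bookkeeping.

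Minor points:
\begin{enumerate}
\item $D_1$ and $D_4$ meet transversally at $(1,\pm1)$, not tangentially. In any case the integrand is positive on $\sigma$, so Tonelli's theorem justifies swapping the order without local analysis; finiteness of one iterated integral suffices.
\item The inner $x$-integral behaves like $(1-y^2)^{-a}$, not $(1-y)^{-a}(1-y^2)^{-a}$.
\item To get the displayed $F_1(2a+2b-1;b,b;a+2b;\cdot,\cdot)$ you must take $x_3=-1$ in Lemma \ref{lemma: 4 poles}, not an arbitrary choice among $-1,\pm t_2$.
\end{enumerate}
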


\begin{rmk}
    We consider another parametrization of this quadratic case.
    Let $0<t_{1}<1<t_{2}$ and let the region $\sigma$ in $\RR^{2}$ defined by
    \[
    \sigma=\{(x,y)\in \RR^2|\  t_{1}<x<t_{2}^{2},1<y<\sqrt{x}\}=\{(x,y)\in \RR^2|\ 1<y<t_{2}, y^2<x<t_{2}^{2} \}.
    \]
    Consider the two-form 
    \[
    \omega=\frac{dx\wedge dy}{(x-y^2)^{2-a-b}(t_{2}^{2}-x)^{b}(x-t_{1})^{a}(y-1)^{2a+2b-2}}.
    \]
    Using Fubini formula, we can obtain the following equation:
    \[
    \int_{1}^{t_{2}}\eta=c\cdot\int_{1}^{t_{2}^{2}}\omega,
    \]
    where 
    \[
    \eta=(t_{2}^{2}-t_{1})^{1-a-b}\frac{dz}{(z-1)^{2a+2b-2}(t_{2}^{2}-z^2)^{1-a}(z^2-t_{1})^{1-b}},
    \]
    \[
    \omega=\frac{dz}{(z-t_{1})^{a}(t_{2}^{2}-z)^{b}(z-1)^{a+b-1}z^{\frac{3}{2}-a-b}}
    \]
    and 
    \[c=2^{2a+2b-3}\frac{\Gamma(3-2a-2b)\Gamma(a)}{\Gamma(2-a-b)\Gamma(1-b)}.\]
    This coincides with the formula \cite[13.9 Page 134]{deligne1993commensurabilities} and the constant $c$ can be read directly by Lemma \ref{lemma: 3 poles}.
\end{rmk}

\subsection{Degeneration to Figure \ref{Figure: (1,2)+(1,0)+(1,0)+(0,1), Degeneration 1}}

If we take $t_2\rightarrow 0$, we obtain
\begin{eqnarray}
\label{equation:quadracase,dege1}
    t_1\pFq{}{1}{2a+2b-1;b,b}{a+2b}{\frac{t_1-1}{2t_1},\frac{t_1-1}{2t_1}}=\pFq{}{1}{a+b;1-a,a+b-\frac{1}{2}}{a+2b}{\frac{t_1^2-1}{t_1^2},\frac{t_1^2-1}{t_1^2}}.
\end{eqnarray}

\begin{figure}[H] %H为当前位置, !htb为忽略美学标准, htbp为浮动图形
\centering %图片居中
\includegraphics[width=0.3\textwidth]{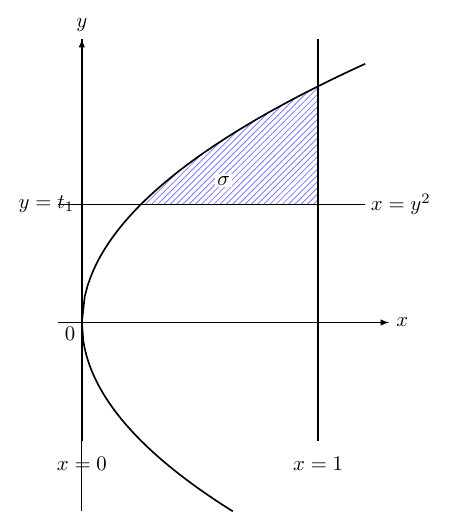} %插入图片, []中设置图片大小, {}中是图片文件名
\caption{(1,2)+(1,0)+(1,0)+(0,1), Degeneration 1} %最终文档中希望显示的图片标题
\label{Figure: (1,2)+(1,0)+(1,0)+(0,1), Degeneration 1}
\end{figure}

The following lemma relates a degeneration of the Appell hypergeometric function and the classical hypergeometric function.

\begin{lem}
For complex numbers $a,b,b^{\prime},c$ with $\mathrm{Re}(c)>\mathrm{Re}(a)>0$ and $x\in \mathbb{C}\setminus [1,\infty)$
\begin{eqnarray*}
    \pFq{}{1}{a;b,b^{\prime}}{c}{x,x}=\pFq{2}{1}{a,b+b^{\prime}}{c}{x}
\end{eqnarray*}
\end{lem}

%By definition, both sides are equal 
%\begin{eqnarray*}\frac{\Gamma(c)}{\Gamma(a)\Gamma(c-a)}\int_{0}^{1}t^{a-1}(1-t)^{c-a-1}(1-xt)^{-2b}dt.\end{eqnarray*}

Applying this lemma to (\ref{equation:quadracase,dege1}), we obtain:
\begin{eqnarray*}
    t\cdot \pFq{2}{1}{2a+2b-1,2b}{a+2b}{{t-1\over 2t}}= \pFq{2}{1}{a+b,b+\frac{1}{2}}{a+2b}{\frac{t^2-1}{t^2}},
\end{eqnarray*}
for $b>0$, $\frac{1}{2}<a+b<1$ and $0<t<1$.

Taking $z=1-{1\over t^2}$, this coincides with Goursat's quadratic transformation \cite{goursat1881equation}, see also \cite[Page 111, (13)]{bateman1953higher}:

\begin{cor}[Goursat's quadratic transformation]
\label{Goursat1}
For real numbers $a,b,z$ in the following range $b<1$ in the following range $\frac{1}{2}<a<b+\frac{1}{2}$ and $z<0$, the Gauss hypergeometric function admits the transformation formula:
    \begin{eqnarray*}
    \pFq{2}{1}{a,b}{a+b-{1\over 2}}{z}=(1-z)^{-{1\over 2}} \pFq{2}{1}{2a-1, 2b-1}{a+b-{1\over 2}}{ {1\over 2}-{1\over 2} \sqrt{1-z}}.
\end{eqnarray*}
\end{cor}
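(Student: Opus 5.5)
The plan is to read off the corollary from the degenerate identity obtained just above by a change of variables and a renaming of parameters, and then to extend the range of validity by analytic continuation. The starting point is the identity
\[
t\cdot \pFq{2}{1}{2a+2b-1,2b}{a+2b}{\tfrac{t-1}{2t}}= \pFq{2}{1}{a+b,b+\frac{1}{2}}{a+2b}{\tfrac{t^2-1}{t^2}},
\]
valid for $b>0$, $\frac12<a+b<1$ and $0<t<1$. It comes from Theorem \ref{thm:sector region for (1,2)+(1,0)+(1,0)+(0,1)} by letting $t_2\to 0$ and then applying the lemma $F_1(a;b,b';c;x,x)={}_2F_1(a,b+b';c;x)$. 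Concretely, I would first justify the limit $t_2\to0$. Both sides of Theorem \ref{thm:sector region for (1,2)+(1,0)+(1,0)+(0,1)} are given by Euler-type integrals whose integrands depend continuously on $t_2$. Their arguments stay in $\CC\setminus[1,\infty)$: the arguments are negative for $0<t_2<t_1<1$. So dominated convergence lets us pass to $t_2=0$.

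Next, to avoid clashing with the corollary's letters, rename the corollary's parameters $A,B$. Put $A=a+b$ and $B=b+\frac12$. Then $A+B-\frac12=a+2b$, $2A-1=2a+2b-1$ and $2B-1=2b$, so the parameters on both sides match the corollary. For the variable, set $z=1-\frac1{t^2}$, so $t=(1-z)^{-1/2}$. The condition $0<t<1$ is exactly $z<0$. Then $\frac{t^2-1}{t^2}=z$ and $\frac{t-1}{2t}=\frac12-\frac12\sqrt{1-z}$, and the prefactor $t$ becomes $(1-z)^{-1/2}$. Substituting gives the stated formula for $\frac12<A<1$ and $B>\frac12$.

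The remaining step, which I expect to be the only real obstacle, is matching the parameter range in the statement. The derivation gives $\frac12<A<1$, $B>\frac12$, while the corollary asserts $B<1$ and $\frac12<A<B+\frac12$. Two tools are available here.

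\begin{enumerate}
\item Both sides are symmetric under $A\leftrightarrow B$, since ${}_2F_1$ is symmetric in its upper parameters. So the proven region may be taken together with its mirror image.
\item Fix $z<0$. Both arguments, $z$ and $\frac12-\frac12\sqrt{1-z}$, are negative reals, hence lie in the cut plane. Both sides are then holomorphic in $(A,B)$ wherever $A+B-\frac12\notin\{0,-1,-2,\dots\}$.
\end{enumerate}

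The proven region contains a nonempty open subset of $\RR^2$. Hence the identity theorem for holomorphic functions of two variables, applied on a connected domain of $(A,B)$ avoiding the poles, extends the identity to the whole stated range. In particular it covers $b<1$, $\frac12<a<b+\frac12$ in the corollary's notation, which forces $c=a+b-\frac12>0$ there.
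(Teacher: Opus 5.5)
Your proposal is correct and follows the paper's route: let $t_2\to 0$ in Theorem \ref{thm:sector region for (1,2)+(1,0)+(1,0)+(0,1)}, collapse $F_1(\cdot\,;x,x)$ to ${}_2F_1$, and substitute $z=1-1/t^2$ with $A=a+b$, $B=b+\tfrac12$. Your additional steps are sound and fill a gap the paper leaves implicit: dominated convergence justifies the limit, and analytic continuation in $(A,B)$ for fixed $z<0$ extends the parameter range, which is genuinely needed because the derivation only gives $\tfrac12<A<1$, $B>\tfrac12$, and neither this region nor its mirror contains the stated range (e.g.\ $(A,B)=(0.6,0.2)$).
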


\subsection{Degeneration to Figure \ref{figure: (1,2)+(1,0)+(1,0)+(0,1), Degeneration 2}}
Consider another kind of degeneration by taking $t_2\rightarrow t_1$. Replacing $t_1$ by $t$, we obtain:
\begin{eqnarray*}
    \pFq{2}{1}{2a+b-1,a+b-\frac{1}{2}}{2a+2b-1}{\frac{t^2-1}{t^2}}=t^{2a+b-1}\pFq{2}{1}{2a+b-1,b}{a+b}{-\frac{(1-t)^2}{4t}},
\end{eqnarray*}
for $a,b>0$, $\frac{1}{2}<a+b<1$ and $0<t<1$.
%\begin{eqnarray*}&&\lim_{t_2\rightarrow t_1^2}\frac{\Gamma(1-a-b)\Gamma(b)\Gamma(2a+2b-1)}{2^{2a+2b-1}\Gamma(a+2b)}(1-t_1^2)^{a+2b-1}(1-t_2)^{a+b-1}(t_1^2-t_2)^{-b}\cdot\\&&F_1(2a+2b-1;b,b;a+2b;\frac{(1-t_1)(\sqrt{t_2}+1)}{2(\sqrt{t_2}-t_1)},\frac{(1-t_1)(\sqrt{t_2}-1)}{2(\sqrt{t_2}+t_1)})\\&=&B(1-a-b,b)(1-t_1^2)^{a+b-1}\int_{t_1}^1\frac{dy}{(y^2-t_1^2)^{2-2a-b}(1-y^2)^a}\\&=&\frac{\Gamma(1-a-b)\Gamma(b)\Gamma(2a+b-1)}{2^{2a+2b-1}\Gamma(a+b)}t_1^{-b}(1-t_1^2)^{2a+2b-2}\pFq{2}{1}{b,2a+b-1}{a+b}{-\frac{(1-t_1)^2}{4t_1}}\end{eqnarray*}

%and 

%\begin{eqnarray*}&&\lim_{t_2\rightarrow t_1^2}\frac{\Gamma(b)\Gamma(2a+2b-1)\Gamma(1-a-b)}{2^{2a+2b-1}\Gamma(a+2b)}t_1^{1-2a-2b}(t_1^2-t_2)^{a-1}(1-t_1^2)^{a+2b-1}\cdot\\&&F_1(a+b;a+b-\frac{1}{2},1-a;a+2b;\frac{t_1^2-1}{t_1^2},\frac{t_1^2-1}{t_1^2-t_2})\\&=&\frac{\Gamma(2a+2b-1)\Gamma(1-a-b)}{2^{2a+2b-1}\Gamma(a+b)}\int_{t_1^2}^{1}\frac{dx}{(x-t_1^2)^{2-2a-2b}(1-x)^{1-b}x^{a+b-\frac{1}{2}}}\\&=&\frac{\Gamma(2a+b-1)\Gamma(b)\Gamma(1-a-b)}{2^{2a+2b-1}\Gamma(a+b)}(1-t_1^2)^{2a+2b-2}t_1^{1-2a-2b}\pFq{2}{1}{a+b-\frac{1}{2},2a+b-1}{2a+2b-1}{\frac{t_1^2-1}{t_1^2}}.\end{eqnarray*}

\begin{figure}[H] %H为当前位置, !htb为忽略美学标准, htbp为浮动图形
\centering %图片居中
\includegraphics[width=0.3\textwidth]{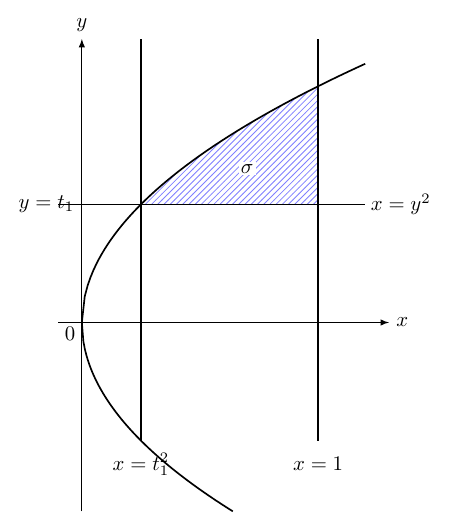} %插入图片, []中设置图片大小, {}中是图片文件名
\caption{(1,2)+(1,0)+(1,0)+(0,1), Degeneration 2} %最终文档中希望显示的图片标题
\label{figure: (1,2)+(1,0)+(1,0)+(0,1), Degeneration 2}
\end{figure}

By taking $z=1-\frac{1}{t^2}$, this coincides with Goursat's quadratic transformation \cite{goursat1881equation}, see also \cite[Page 113, (30)]{bateman1953higher}:

\begin{cor}[Goursat's quadratic transformation]
\label{Goursat2}
For real numbers $a,b,z$ in the following range $0<b<\frac{1}{2}$, $a<2b$ and $z<0$, we have
    \begin{eqnarray*}
    \pFq{2}{1}{a,b}{2b}{z}=(1-z)^{-\frac{a}{2}}\pFq{2}{1}{a,2b-a}{b+\frac{1}{2}}{-\frac{1}{4}(1-z)^{-\frac{1}{2}}(1-(1-z)^{\frac{1}{2}})^2}.
\end{eqnarray*}
\end{cor}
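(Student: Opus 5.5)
The plan is to obtain Corollary \ref{Goursat2} from the displayed degeneration identity
\[
\pFq{2}{1}{2a+b-1,a+b-\frac{1}{2}}{2a+2b-1}{\tfrac{t^2-1}{t^2}}=t^{2a+b-1}\pFq{2}{1}{2a+b-1,b}{a+b}{-\tfrac{(1-t)^2}{4t}}
\]
by a change of variables and a renaming of parameters. Before doing so, I will justify that identity itself. It is the limit $t_2\to t_1$ of Theorem \ref{thm:sector region for (1,2)+(1,0)+(1,0)+(0,1)}, but both sides of that theorem degenerate there: $(t_1^2-t_2^2)^{a+b-1}$ blows up, and the second argument $\frac{t_1^2-1}{t_1^2-t_2^2}\to-\infty$. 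So I will not pass to the limit in the formula. Instead I will set $t_2=t_1=t$ directly in the two iterated integrals of the sector region, where the factors $(x-t_2^2)^{1-a}$ and $(x-t_1^2)^{1-a-b}$ merge into $(x-t^2)^{2-2a-b}$, and $(y^2-t_2^2)^b(y-t_1)^{2-2a-2b}$ becomes $(y-t)^{2-2a-b}(y+t)^b$.

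With this merging, the $x$-integral $\int_{t^2}^1 dx/\bigl((x-t^2)^{2-2a-b}(1-x)^{1-b}x^{a+b-\frac12}\bigr)$ is handled by Corollary \ref{corollary: poles + infty}. It gives a ${}_2F_1$ with argument $\frac{t^2-1}{t^2}$. The $y$-integral over $(t,1)$ has poles $t,1,-1,-t$, and Lemma \ref{lemma: 4 poles} turns it into a ${}_2F_1$. The argument should simplify to $-\frac{(1-t)^2}{4t}$: with $x_1=t$, $x_2=1$, $x_3=-1$, $x_4=-t$, the cross-ratio is $R_4=\frac{(-t+1)(1-t)}{(-2t)\cdot 2}$. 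The inner integrals are unchanged in form, since Lemma \ref{lemma: 3 poles} still applies for $b>0$ and $a+b<1$, so Fubini gives the displayed identity. The constants are fixed by comparing both sides at $t=1$, where both hypergeometric functions equal $1$. This is legitimate because the prefactors $t^{\ast}$ and the powers of $(1-t^2)$ match.

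Next comes the substitution. Put $z=1-\frac1{t^2}$, so $t=(1-z)^{-1/2}$ for $0<t<1$, $z<0$. Then
\[
-\frac{(1-t)^2}{4t}=-\tfrac14 (1-z)^{-\frac12}\bigl(1-(1-z)^{\frac12}\bigr)^2,
\]
since dividing numerator and denominator by $t^2$ gives $(1-t)^2/t=(t^{-1}-1)^2\,t=((1-z)^{1/2}-1)^2(1-z)^{-1/2}$. Also $t^{2a+b-1}=(1-z)^{-(2a+b-1)/2}$. Now rename $A=2a+b-1$ and $B=a+b-\frac12$. Then $2B=2a+2b-1$ is the lower parameter on the left, the right-hand ${}_2F_1$ has parameters $A$, $b=2B-A$, lower $a+b=B+\frac12$, and the prefactor is $(1-z)^{-A/2}$. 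This is exactly the corollary with $(a,b)$ replaced by $(A,B)$.

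The remaining step is the parameter range. $\frac12<a+b<1$ gives $0<B<\frac12$, and $b>0$ gives $A<2B$. Conversely, any $(A,B)$ with $0<B<\frac12$ and $A<2B$ comes from $b=2B-A>0$, $a=B+\frac12-b$, provided $a>0$ as required by $a>0$ in the degeneration. When $a\le 0$ I would extend by analytic continuation in the parameters, since both sides are analytic in $A$ for fixed $z<0$. I expect the main obstacle to be the legitimacy of the $t_2=t_1$ specialization together with the constant bookkeeping. Evaluating at $t=1$, as above, avoids computing the Beta-function constants explicitly.
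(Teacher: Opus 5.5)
Your route is the paper's: you specialize the sector-region double integral to $t_2=t_1$, evaluate both iterated integrals with Lemma~\ref{lemma: 3 poles}, Corollary~\ref{corollary: poles + infty} and Lemma~\ref{lemma: 4 poles}, then substitute $z=1-1/t^2$ and rename $A=2a+b-1$, $B=a+b-\frac12$. Your two refinements are sound. Specializing inside the integrals is the honest version of the paper's ``take $t_2\to t_1$''. Normalizing at $t\to1^-$ also works, since the prefactors are $t^{1-2a-2b}(1-t^2)^{2a+2b-2}$ and $t^{-b}(1-t^2)^{2a+2b-2}$, and the remaining constants both equal $2^{1-2a-2b}\Gamma(1-a-b)\Gamma(b)\Gamma(2a+b-1)/\Gamma(a+b)$.

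The gap is in the convergence and range bookkeeping. After merging, the exponent at the endpoint $x=t^2$ of the $x$-integral, and at $y=t$ of the $y$-integral, is $\mu_1=2-2a-b$. Corollary~\ref{corollary: poles + infty} and Lemma~\ref{lemma: 4 poles} need $\mu_1<1$, i.e. $2a+b>1$. This is not implied by $a,b>0$, $\frac12<a+b<1$; take $a=\frac1{10}$, $b=\frac12$.

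For $2a+b\le 1$ both outer integrals diverge, and by Tonelli so does the double integral. Geometrically, three branch curves pass through $(t^2,t)$, and the exceptional curve of the blow-up gets weight $2-2a-b\ge 1$. The factor $B(2a+b-1,\cdot)$ then no longer comes from a convergent integral.

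Since $2a+b-1=A$, your Fubini argument proves the identity only for $0<A<2B$, $0<B<\frac12$. On that range $a>0$ is automatic, because $2a+b>1>a+b$. It does not cover all $A>B-\frac12$, as your ``conversely'' step asserts.

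The repair is to run your analytic continuation in $A$ starting from the interval $(0,2B)$, not only for $a\le 0$. For fixed $z<0$ both sides are entire in $A$; for instance, Pfaff's transformation moves the negative arguments into $(0,1)$, where the series converge locally uniformly in the parameters. This gives the corollary for all $A<2B$. The paper's own statement of the degenerate identity, for all $a,b>0$ with $\frac12<a+b<1$, has the same oversight.
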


\section{Partition $(2,2)+(1,0)+(0,1)$}
\label{section: (2,2)+(1,0)+(0,1)}

In this section, we consider the case where the divisor $D$ decomposes into three components $D_{1},D_{2},D_{3}$ with degree $(2,2)$, $(1,0)$ and $(0,1)$ respectively.

Let $a,p,q$ be real numbers that satisfy $\frac{1}{2}<a<1$ and $1<p<q$. 

Let $x_{1}<x_{2}$ be two real roots of $-2px^2+(1-pq)x+2q=0$ and $y_{1}<y_{2}$ be two real roots of $-2qy^{2}+(1-pq)y+2p=0$.

Consider the functions
\begin{eqnarray*}
    f(x,y)&=&(qy-1)^{2}+x(-2p+2qy^{2})+x^{2}(y+p)^{2}\\
          &=&(px-1)^{2}+y(-2q+2px^{2})+y^{2}(x+q)^{2}.
\end{eqnarray*}

For $0<x<x_{2}$ fixed, denote by $y_{-}(x)<y_{+}(x)$ two roots of $f(x,y)=0$. For $0<y<y_{2}$ fixed, denote $x_{-}(y)<x_{+}(y)$ two roots of $f(x,y)=0$.

Let $s,t$ be two real numbers that satisfy $\frac{1}{q}<s<y_{2}$ and $0<t<x_{-}(s)$.

We define the divisors as follows:
\begin{eqnarray*}
    D_{1} &:& f(x,y)=0\\
    D_{2} &:& x-t=0\\
    D_{3} &:& y-s=0.
\end{eqnarray*}

We consider the period integral of the form:

\begin{eqnarray*}
    \omega=\frac{dx\wedge dy}{(-f(x,y))^{a}(t-x)^{2-2a}(s-y)^{2-2a}}.
\end{eqnarray*}

\begin{figure}[H] %H为当前位置, !htb为忽略美学标准, htbp为浮动图形
\centering %图片居中
\includegraphics[width=0.35\textwidth]{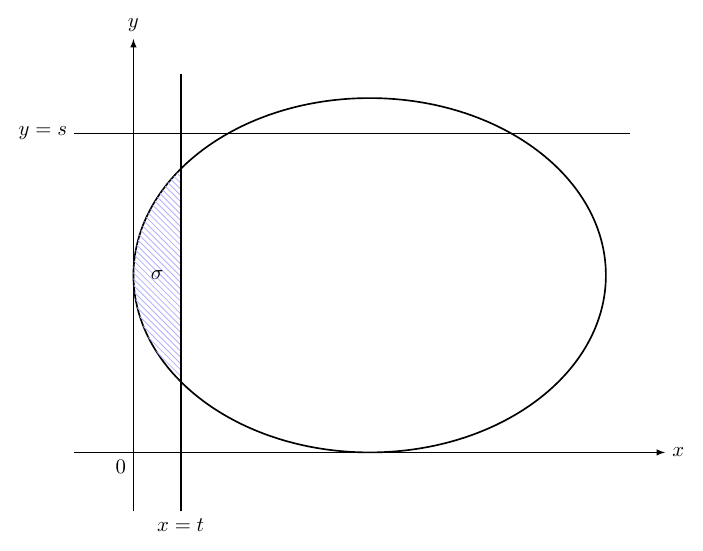} %插入图片, []中设置图片大小, {}中是图片文件名
\caption{(2,2)+(1,0)+(0,1) Segment Region} %最终文档中希望显示的图片标题
\label{figure: (2,2)+(1,0)+(0,1) Segment Region}
\end{figure}

The specified integration cycle $\sigma$ (see Figure \ref{figure: (2,2)+(1,0)+(0,1) Segment Region}) is defined by 
\begin{eqnarray*}
    \sigma &=& \{(x,y)\in \RR^{2}\ | \ 0<x<t, y_{-}(x)<y<y_{+}(x) \}\\
           &=& \{(x,y)\in \RR^{2}\ | \ y_{-}(t)<y<y_{+}(t), x_{-}(y)<x<t \}.
\end{eqnarray*}

Using a similar argument as in $\S$ \ref{section:partition(1,2)(1,1)(1,0)}, we obtain the following transformation formula relating Appell--Lauricella hypergeometric function $F_{D}^{(4)}$.
\begin{thm}
    Let $a,p,q,s,t$ be real numbers in the range that $\frac{1}{2}<a<1$, $1<p<q$, $\frac{1}{q}<s<y_{2}$ and $0<t<x_{-}(s)$. We have the following transformation formula relating $F_{D}^{(4)}$:
\begin{eqnarray*}
    &&\Fd{(4)}{D}{\frac{3}{2}-a;a-\frac{1}{2},a-\frac{1}{2},1-a,1-a}{\frac{1}{2}+a}{\frac{t}{x_{1}},\frac{t}{x_{2}},\frac{t}{x_{-}(s)},\frac{t}{x_{-}(s)}}\\
    &=&C\cdot\Fd{(4)}{D}{a;2-2a,a-\frac{1}{2},a-\frac{1}{2},a-\frac{1}{2}}{2a}{\frac{\delta}{s-y_{-}(t)},-\frac{\delta}{y_{-}(t)},\frac{\delta}{y_{2}-y_{-}(t)},\frac{\delta}{y_{1}-y_{-}(t)}},
\end{eqnarray*}
where 
\[\delta=(4(pq-1)t(t+q)^{-4}(-2pt^2+(1-pq)t+2q))^{\frac{1}{2}}\] 
and 
\[C=\frac{2^{1-2a}}{(t+q)^{2a}}(\frac{s-y_{-}(t)}{qs-1})^{2a-2}(\frac{(pq-1)(-2pt^2+(1-pq)t+2q)}{y_{-}(t)(-y_{-}(t)^2+\frac{1-pq}{2q}y_{-}(t)+\frac{p}{q})})^{a-\frac{1}{2}}.\]
\end{thm}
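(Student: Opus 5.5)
We compute $\int_\sigma\omega$ in the two iterated orders, following the pattern of Section \ref{section:partition(1,2)(1,1)(1,0)}, and compare the results.

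\emph{Integrating in $y$ first.} Fix $0<x<t$. The factor $(s-y)^{2a-2}$ together with $(-f)^{-a}$ gives, along the fibre, an integrand with poles at $y_\pm(x)$ and $s$. The exponents are $a,a,2-2a$, which sum to $2$. Write $-f=(x+q)^2(y-y_-(x))(y_+(x)-y)$. Lemma \ref{lemma: 3 poles} then gives
\[
B(1-a,1-a)\,(x+q)^{-2a}\,(y_+-y_-)^{1-2a}\,\bigl((s-y_-)(s-y_+)\bigr)^{a-1}.
\]
The product $(s-y_-)(s-y_+)$ equals $f(x,s)/(x+q)^2$. This is a quadratic in $x$ vanishing at $x_\pm(s)$.

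The discriminant $(y_+-y_-)^2$ of $f$ in $y$ is $16(pq-1)x(-2px^2+(1-pq)x+2q)/(x+q)^4$, up to a constant that I will verify by expanding. The claimed $\delta$ matches this. So the $x$-integrand on $(0,t)$ has singularities at $0,x_1,x_2$ with exponent $a-\tfrac12$ each, at $x_\pm(s)$ with exponent $1-a$ each, and at $t$ with exponent $2-2a$. The factors $(x+q)$ cancel: the total power is $-2a+2(2a-1)\cdot\tfrac12\cdot 2\ldots$, and I will check this cancellation with care.

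The exponents sum to $3(a-\tfrac12)+2(1-a)+(2-2a)=\tfrac52+a$, which exceeds $2$. I therefore expect the point at infinity to carry exponent $a-\tfrac12$ with sign adjusted, making the total $2$. Lemma \ref{lemma: 4 poles} is applied with $x_1\to0$ and $x_2\to t$, using the two real poles as reference points. Alternatively I will send one of them to infinity. This produces an $F_D^{(4)}$ with $c=\tfrac12+a$ and with the arguments $t/x_1$, $t/x_2$, $t/x_\pm(s)$ as in the statement.

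\emph{Integrating in $x$ first.} Fix $y\in(y_-(t),y_+(t))$. The $x$-integral runs from $x_-(y)$ to $t$ with poles at $x_\pm(y)$ and $t$, and exponents $a,a,2-2a$. Lemma \ref{lemma: 3 poles} again gives a Beta factor times $(y+p)^{-2a}(x_+-x_-)^{1-2a}\bigl((t-x_-)(x_+-t)\bigr)^{a-1}$. Here $(t-x_-)(x_+-t)=-f(t,y)/(y+p)^2$ is a quadratic in $y$ with roots $y_\pm(t)$. By the symmetric formula, $(x_+-x_-)^2$ is proportional to $y(-2qy^2+(1-pq)y+2p)/(y+p)^4$.

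The resulting $y$-integral over $(y_-(t),y_+(t))$ has exponent $1-a$ at the endpoints and $a-\tfrac12$ at $0,y_1,y_2$. It also keeps $(s-y)^{2a-2}$, which is singular at $s$. Lemma \ref{lemma: 4 poles} with base points $y_-(t),y_+(t)$ yields an $F_D^{(4)}$ with $a$-parameter $a$ and $c=2a$. The arguments have the form $(y_+(t)-y_-(t))/(y_j-y_-(t))$, which gives $\delta/(\cdot)$ with $\delta=y_+(t)-y_-(t)$ read off from the discriminant.

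\emph{Comparison and main obstacle.} Equating the two evaluations and dividing by the common Beta factors gives the formula with $C$. The main obstacle is the bookkeeping. I have to confirm the discriminant factorization of $f$ in each variable and the cancellation of the $(x+q)$ and $(y+p)$ powers. I also have to verify that the hypotheses $\tfrac1q<s<y_2$ and $0<t<x_-(s)$ put every auxiliary pole outside the integration interval, as Lemma \ref{lemma: 4 poles} requires. Finally, I have to simplify the product of constants into the stated $C$. I will do the simplification by evaluating the constants of Lemma \ref{lemma: 4 poles} with the relations $y_+(t)+y_-(t)$ and $y_+(t)y_-(t)$ from $f(t,y)=0$. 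This should produce the factor $-y_-^2+\tfrac{1-pq}{2q}y_-+\tfrac pq$ appearing in $C$.
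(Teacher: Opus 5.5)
Your plan is correct and is exactly the route the paper takes; the paper only points back to \S\ref{section:partition(1,2)(1,1)(1,0)}. The route is: Fubini over $\sigma$; Lemma~\ref{lemma: 3 poles} for the inner integrals, giving Beta factors $B(1-a,1-a)$ and $B(1-a,2a-1)$ with the $(x+q)$ and $(y+p)$ powers cancelling; then Corollary~\ref{corollary: poles + infty} with base points $0,t$ resp.\ $y_\pm(t)$ and $\infty$ as the extra pole. Carried through, the Beta ratio is $2^{2-4a}$ by the duplication formula and the stated $C$ comes out exactly.

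There are only minor slips. The finite exponents sum to $\frac52-a$, not $\frac52+a$, so $\infty$ simply carries $a-\frac12$. The discriminant gives $(y_+-y_-)^2=4(pq-1)x(-2px^2+(1-pq)x+2q)/(x+q)^4$ (factor $4$, consistent with $\delta=y_+(t)-y_-(t)$). Finally, your computation correctly produces $t/x_-(s),\,t/x_+(s)$ as the last two arguments on the left, so the repeated $t/x_-(s)$ in the printed statement is a typo.
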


\section{Partition $(1,2)+(2,1)$}
\label{section: (1,2)+(2,1)}
In this section, we consider the case $\deg D_1=(1,2)$ and $\deg D_2=(2,1)$.

Let $a,b,c>0$ be real parameters satisfying $b^2-4ac>0$. Let $t>0$ be a real parameter satisfying $t>(a+b+c)^2$. Let the divisors be given by
\begin{eqnarray*}
    D_{1} &:& y(ax^{2}+bx+c)-x^{2}=0\\
    D_{2} &:& x-ty^{2}=0.
\end{eqnarray*}

Let $r_{+}(y)>r_{-}(y)$ be two real roots of the quadratic equation $(1-ay)x^2-byx-cy=0$ with respect to $x$. Consider the equation 
\begin{eqnarray*}
at^2y^4-t^2y^3+bty+c=0,
\end{eqnarray*}
which admits $y_{0}$ as its smallest positive real root. Let $y_{1},y_{2},y_{3}$ be the other three roots.

\begin{figure}[H] %H为当前位置, !htb为忽略美学标准, htbp为浮动图形
\centering %图片居中
\includegraphics[width=0.35\textwidth]{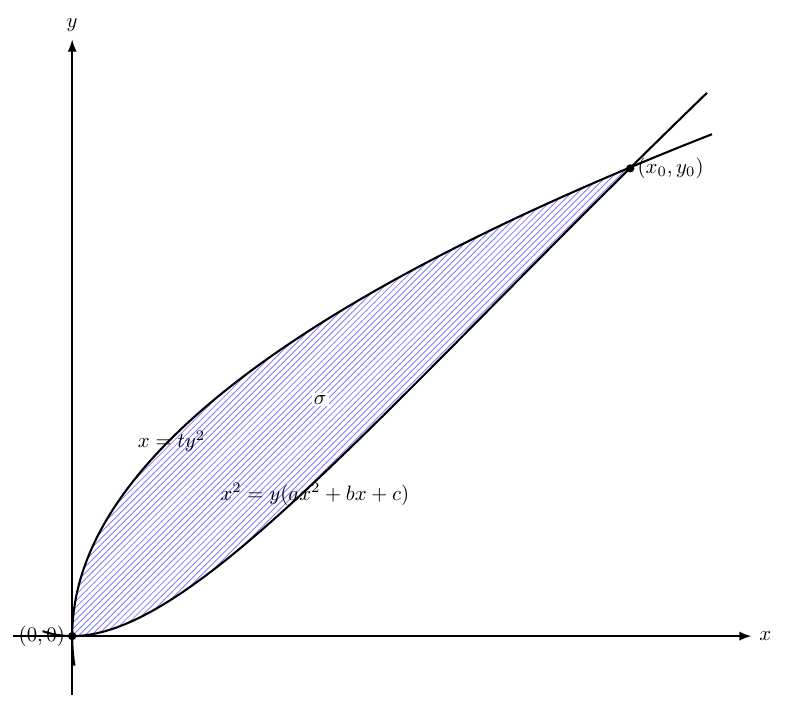} %插入图片, []中设置图片大小, {}中是图片文件名
\caption{(1,2)+(2,1) Region} %最终文档中希望显示的图片标题
\label{figure: (1,2)+(2,1) Region}
\end{figure}

Let $\sigma$ be a region (see Figure \ref{figure: (1,2)+(2,1) Region}) in $\RR^{2}$ defined by
\begin{eqnarray*}
    \sigma &=&\{ (x,y)\in \RR^2|\ 0<y<y_{0},\ ty^2<x<r_{+}(y)\}\\
    &=&\{(x,y)\in \RR^2|\ 0<x<ty_{0}^2, \ \frac{x^2}{ax^2+bx+c}<y<\sqrt{\frac{x}{t}}\}.
\end{eqnarray*}

Let $\omega$ be a two-form
\begin{eqnarray*}
    \omega=\frac{dx\wedge dy}{(y(ax^2+bx+c)-x^{2})^{\frac{2}{3}}(x-ty^2)^{\frac{2}{3}}}.
\end{eqnarray*}

Using a similar argument as in $\S$ \ref{section:partition(1,2)(1,1)(1,0)}, we obtain the following transformation formula relating Appell--Lauricella hypergeometric function $F_{D}^{(3)}$.

\begin{thm}
Let $a,b,c,t$ be real numbers in the range $a,b,c,t>0$, $b^2-4ac>0$ and $t>(a+b+c)^{2}$. Let $y_0,y_1,y_2,y_3$ be the four roots of $at^2y^4-t^2y^3+bty+c$. Let $y_0$ be the smallest positive real root. We have
\begin{eqnarray*}
    &&c^{\frac{1}{6}}\Fd{(3)}{D}{\frac{1}{2};\frac{1}{3},\frac{1}{3},\frac{1}{6}}{\frac{7}{6}}{\frac{y_{0}(y_2-y_1)}{y_2(y_0-y_1)},\frac{y_0(y_3-y_1)}{y_3(y_0-y_1)},\frac{y_0((b^2-4ac)y_1+4c)}{4c(y_0-y_1)}}\\
    &=&(\frac{y_0y_1}{y_0+y_1})^{\frac{1}{2}}t^{\frac{1}{3}}\Fd{(3)}{D}{\frac{1}{2};\frac{1}{3},\frac{1}{3},\frac{1}{6}}{\frac{7}{6}}{\frac{y_0^2(y_2^2-y_1^2)}{y_2^2(y_0^2-y_1^2)},\frac{y_0^2(y_3^{2}-y_1^2)}{y_3^2(y_0^2-y_1^2)},\frac{y_0^2}{y_0^2-y_1^2}}.
\end{eqnarray*}
\end{thm}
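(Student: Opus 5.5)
We compute $\int_\sigma\omega$ in the two iterated orders, exactly as in \S\ref{section:partition(1,2)(1,1)(1,0)}. Write $Q(x)=ax^2+bx+c$.

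\textbf{First order: integrate in $x$ for fixed $y\in(0,y_0)$.} Here $x$ runs from $ty^2$ to $r_+(y)$. The first factor is $y Q(x)-x^2=-(1-ay)(x-r_+(y))(x-r_-(y))$, so the integrand has the shape $(x-ty^2)^{-2/3}(r_+(y)-x)^{-2/3}(x-r_-(y))^{-2/3}$ times $(1-ay)^{-2/3}$. The exponents are $\tfrac23,\tfrac23,\tfrac23$ and sum to $2$. Lemma~\ref{lemma: 3 poles} therefore gives the inner integral as
\[
B(\tfrac13,\tfrac13)(1-ay)^{-2/3}(r_+-ty^2)^{-1/3}(r_+-r_-)^{-1/3}(ty^2-r_-)^{-1/3}.
\]
Two of these factors are elementary symmetric in $r_\pm$:
\[
(r_+-ty^2)(ty^2-r_-)=-\frac{yQ(ty^2)-t^2y^4}{1-ay}=\frac{y\,(at^2y^4-t^2y^3+bty+c)}{1-ay},
\]
\[
(r_+-r_-)^2=\frac{y\big((b^2-4ac)y+4c\big)}{(1-ay)^2}.
\]
The $(1-ay)$ powers then cancel to a single power, and the $y$-integrand becomes a constant times
\[
y^{-1/2}\,\big|at^2y^4-t^2y^3+bty+c\big|^{-1/3}\,\big((b^2-4ac)y+4c\big)^{-1/6}.
\]
Its singular points are $0$, $y_0,y_1,y_2,y_3$, $-4c/(b^2-4ac)$, and $\infty$. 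I would verify that the exponent at $\infty$ is $\tfrac12+\tfrac43+\tfrac16=2$ minus the finite ones, so that $\infty$ carries exponent $0$, while $y_1$ effectively plays the role of $x_3$. We integrate over $(0,y_0)$ and apply Lemma~\ref{lemma: 4 poles} with $x_1=0$, $x_2=y_0$, $x_3=y_1$, and the remaining poles $y_2,y_3,-4c/(b^2-4ac)$ with exponents $\tfrac13,\tfrac13,\tfrac16$. Writing the factor $c$ as $c\cdot(\ldots)$ produces the $c^{1/6}$. This yields $F_D^{(3)}(\tfrac12;\tfrac13,\tfrac13,\tfrac16;\tfrac76;\ldots)$, and the cross-ratios $R_i=\frac{(x_i-y_1)y_0}{x_i(y_0-y_1)}$ reproduce the left-hand arguments.

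\textbf{Second order: integrate in $y$ for fixed $x\in(0,ty_0^2)$.} Here $y$ runs from $x^2/Q(x)$ to $\sqrt{x/t}$. The integrand is $Q^{-2/3}(y-x^2/Q)^{-2/3}\,t^{-2/3}(\sqrt{x/t}-y)^{-2/3}(y+\sqrt{x/t})^{-2/3}$, again with exponents $\tfrac23\cdot 3$. Lemma~\ref{lemma: 3 poles} gives a product of cube roots of $\big(\sqrt{x/t}\pm x^2/Q\big)$ and $2\sqrt{x/t}$. The product $\big(x/t-x^4/Q^2\big)$ rationalizes to the quartic after the substitution $x=tu^2$:
\[
\frac{x}{t}-\frac{x^4}{Q^2}=\frac{u^2\big(Q^2-t^2u^6\cdot t\ldots\big)}{Q^2},
\]
which factors as $(Q-t^2u^3)(Q+t^2u^3)$ with $Q(tu^2)=at^2u^4+btu^2+c$. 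These two factors are exactly the quartic $P(u)=at^2u^4-t^2u^3+btu+c$ and $P(-u)$.

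So in the variable $u=\sqrt{x/t}\in(0,y_0)$ the integrand is a constant times a power of $u$ times $|P(u)P(-u)|^{-1/3}$, and both are even in $u$. Substituting $v=u^2$ gives the poles $0$, $y_0^2,\ldots,y_3^2$, and $\infty$. Applying Lemma~\ref{lemma: 4 poles} with $x_3=y_1^2$ gives the right-hand $F_D^{(3)}$: its arguments are the squared cross-ratios, and the pole at $0$ gives the last argument $y_0^2/(y_0^2-y_1^2)$ with exponent $\tfrac16$. I expect the exponents to come out as $\tfrac12;\tfrac13,\tfrac13,\tfrac16$ again, which should be checked.

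\textbf{Comparison and main obstacle.} Finally we equate the two expressions. The Beta factors coincide, and the prefactors reduce, via Vieta for $P$ (for instance $y_0y_1y_2y_3=c/(at^2)$ and $\sum y_i=1/a$), to $(y_0y_1/(y_0+y_1))^{1/2}t^{1/3}c^{-1/6}$. The main obstacle is this constant bookkeeping: tracking all the $|x_1-x_i|^{-\mu_i}$ factors from Lemma~\ref{lemma: 4 poles} and simplifying them with Vieta, together with justifying the sign and reality conditions. Those conditions are that $y_0$ is the smallest positive root, that $y_1$ lies outside $(0,y_0)$ (and $y_1^2$ outside $(0,y_0^2)$), and that the range $t>(a+b+c)^2$ guarantees $r_+(y)>ty^2$ on $(0,y_0)$. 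I would first confirm the constant numerically at a sample point before simplifying symbolically.
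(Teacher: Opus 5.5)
Your plan follows the route the paper intends: Fubini on $\sigma$, Lemma~\ref{lemma: 3 poles} for the inner integrals, Lemma~\ref{lemma: 4 poles} for the outer ones, and constants via Vieta. It does close, but several steps are wrong as written.

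First, $yQ(ty^2)-t^2y^4=y\,(at^2y^4-t^2y^3+bty^2+c)$, so the relevant quartic is $P(y)=at^2y^4-t^2y^3+bty^2+c$. The $bty$ in the statement is a misprint, and your identities $Q(tu^2)\mp t^2u^3=P(\pm u)$ hold only with $bty^2$. Two smaller slips: $(r_+-ty^2)(ty^2-r_-)=+\frac{yQ(ty^2)-t^2y^4}{1-ay}$, and the powers of $1-ay$ cancel completely ($-\frac23+\frac13+\frac13=0$).

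Second, in your $y$-inner order the powers of $Q$ and $u$ cancel entirely. After $x=tu^2$ the integral is $2^{2/3}t^{1/3}B(\frac13,\frac13)\int_0^{y_0}(P(u)P(-u))^{-1/3}du$, with $P(u)P(-u)=a^2t^4\prod_i(u^2-y_i^2)$, and $v=u^2$ produces the factor $v^{-1/2}$. So $0$ is $x_1$ with exponent $\frac12$; this is what gives the parameters $\frac12$ and $\frac76$. The third argument $\frac{y_0^2}{y_0^2-y_1^2}=\frac{x_2-x_1}{x_2-x_3}$ is the cross-ratio of $\infty$, whose exponent is $2-\frac12-\frac43=\frac16$; it does not come from the pole at $0$. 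You therefore need Lemma~\ref{lemma: 4 poles} in the limiting form $x_6\to\infty$, which simply drops the factor $|x_1-x_6|^{-\mu_6}$ from $C$.

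Reality of the roots needs to be settled, not just flagged. By Descartes' rule $P$ has no negative roots and at most two positive ones. The values $P(0)=c>0$, $P(t^{-1/2})=a+b+c-\sqrt t<0$ and $P(1/a)=bt/a^2+c>0$ then give $0<y_0<t^{-1/2}<y_1<1/a$. So $y_1$ must be taken to be this second positive root; otherwise the M\"obius map in Lemma~\ref{lemma: 4 poles} is not real, and $(\frac{y_0y_1}{y_0+y_1})^{1/2}$ is not real either. The remaining roots satisfy $y_3=\bar y_2$ and are non-real. The lemma extends verbatim to such a conjugate pair, since $|x-y_2|^{-1/3}|x-y_3|^{-1/3}$ transforms like two real factors, with $R_5=\bar R_4$.

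The roles of the hypotheses are also slightly misassigned. The condition $t>(a+b+c)^2$ is what produces $y_0$ and gives $1-ay>0$ on $(0,y_0)$. The inequality $r_+(y)>ty^2$ on that interval is just $P(y)>0$.

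Finally, the constant does work out, using only $y_0y_1y_2y_3=\frac{c}{at^2}$. Let $F_L,F_R$ denote the left and right $F_D^{(3)}$ in the statement.
\begin{itemize}
\item The $x$-inner order gives $\int_\sigma\omega=2^{-1/3}B(\frac13,\frac13)B(\frac12,\frac23)\,c^{-1/2}(y_1-y_0)^{-1/2}(y_0y_1)^{1/2}F_L$. Here the factor $(b^2-4ac)^{-1/6}$ from the linear term cancels against $|x_1-x_6|^{-1/6}=(\frac{4c}{b^2-4ac})^{-1/6}$.
\item The $y$-inner order gives $2^{-1/3}B(\frac13,\frac13)B(\frac12,\frac23)\,t^{1/3}c^{-2/3}(y_1^2-y_0^2)^{-1/2}y_0y_1F_R$.
\end{itemize}
Equating these two expressions is exactly the stated identity; the relation $\sum y_i=1/a$ is never needed.
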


\section{Partition $(1,3)+(1,0)+(1,0)$}
\label{section:(1,3)+(1,0)+(1,0)}
In this section, we consider the case $\deg D_1=(1,3)$, $\deg D_2=(1,0)$ and $\deg D_3=(1,0)$.

\subsection{Period integral in Figure \ref{figure: (3,1)+(0,1)+(0,1) Segment Region}}

Let $t,a,s,w$ be real numbers satisfying $-\frac{3}{2}<t<-1$, $a<1$ and $0<s<w<1$. We consider the following cubic polynomial of $y$:
\[
f(x,y)=y^3+ty^2-(3+2t)xy+(2+t)x,
\]
which has discriminant
\[
\Delta(x)=4x(x-1)((3+2t)^3x+(t^4+2t^3)).
\]
Consider the divisors:
\begin{eqnarray*}
    D_{1} &:& f(x,y)=0\\
    D_{2} &:& x-s=0\\
    D_{3} &:& x-w=0.
\end{eqnarray*}
For any fixed $0<x<1$, the cubic polynomial $f(x,y)$ of $y$ has 3 real roots $r_1(x)<r_2(x)<r_3(x)$. On the other hand, fixing $y\in \RR\setminus\{\frac{2+t}{3+2t}\}$, let $F_t(y)=\frac{y^3+ty^2}{(3+2t)y-(2+t)}$ be the root of $f(x,y)$, viewed as a degree-one polynomial of $x$.

For $x\neq r_{1}(s)$, let $R(x)=\frac{(r_{2}(s)-r_{1}(s))(x-r_{3}(s))}{(r_{2}(s)-r_{3}(s))(x-r_{1}(s))}$. Let $x_{0}=-\frac{t^{3}(t+2)}{(2t+3)^{3}}$.

\begin{figure}[H] %H为当前位置, !htb为忽略美学标准, htbp为浮动图形
\centering %图片居中
\includegraphics[width=0.3\textwidth]{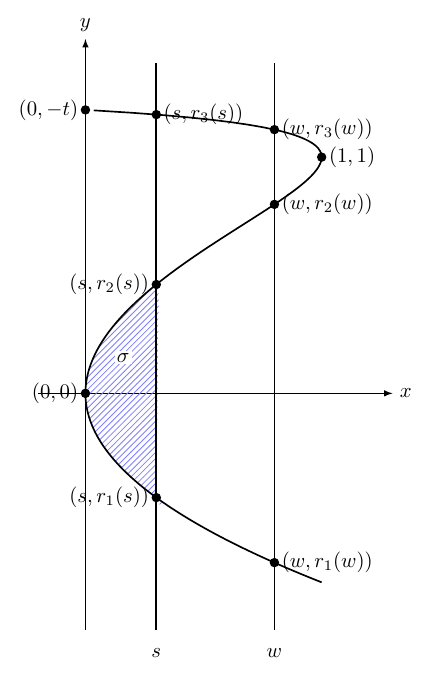} %插入图片, []中设置图片大小, {}中是图片文件名
\caption{(3,1)+(0,1)+(0,1) Segment Region} %最终文档中希望显示的图片标题
\label{figure: (3,1)+(0,1)+(0,1) Segment Region}
\end{figure}

Consider the following cycle $\sigma\subset \RR^2$
\begin{eqnarray*}
    \sigma&=&\{(x,y)\in \RR^{2}\ | \ 0<x<s, \ r_1(x)<y<r_2(x)\}\\
              &=&\{(x,y)\in \RR^{2}\ | r_{1}(s)<y<r_{2}(s), \ F_{t}(y)<x<s\}
\end{eqnarray*}
and a two-form $\omega$
\begin{eqnarray*}
    \omega&=&\frac{dx\wedge dy}{f(x,y)^{\frac{2}{3}}(s-x)^{a}(w-x)^{\frac{4}{3}-a}}\\
    &=&\frac{dx\wedge dy}{(y^3+ty^2-(3+2t)xy+(2+t)x)^{\frac{2}{3}}(s-x)^{a}(w-x)^{\frac{4}{3}-a}}.
\end{eqnarray*}

We obtain the following transformation formula.

\begin{thm}
    Let $t,a,s,w$ be real numbers satisfying $-\frac{3}{2}<t<-1$, $a<1$ and $0<s<w$. We have
\begin{eqnarray*}
    &&\Fd{(3)}{D}{\frac{5}{6};\frac{1}{6},\frac{1}{6},\frac{4}{3}-a}{\frac{11}{6}-a}{s,\frac{s}{x_{0}},\frac{s}{w}}\\
    &=&C\cdot \Fd{(3)}{D}{\frac{4}{3}-a;1-a,1-a,1-a}{\frac{8}{3}-2a}{R(r_{1}(w)),R(r_{2}(w)),R(r_{3}(w))},
\end{eqnarray*}
where 
\[C=\frac{(-t)^{\frac{1}{2}}(t+2)^{\frac{1}{6}}w^{\frac{4}{3}-a}((s-1)((3+2t)^{3}s+t^{4}+2t^{3}))^{\frac{5}{6}-a}}{(w-s)^{\frac{1}{3}}(r_{3}(s)-r_{2}(s))^{3-3a}f(w,r_{1}(s))^{1-a}}.\]
\end{thm}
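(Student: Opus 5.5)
We compute $\int_\sigma\omega$ in the two iterated orders given by the two descriptions of $\sigma$, reduce each to a single-variable integral, and compare, exactly as in the proof of Theorem \ref{thm:transformula(1,2)(1,1)(1,0)}.

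\emph{Integrating in $y$ first.} For fixed $0<x<s$ the inner integral is
\[
\int_{r_1(x)}^{r_2(x)}\frac{dy}{\bigl((y-r_1)(r_2-y)(r_3-y)\bigr)^{2/3}},
\]
after fixing signs. Here $r_1(x),r_2(x),r_3(x)$ are the three real roots of $f(x,\cdot)$, so the exponents are $\tfrac23,\tfrac23,\tfrac23$, which sum to $2$. Lemma \ref{lemma: 3 poles} applies and gives
\[
B(\tfrac13,\tfrac13)\bigl((r_2-r_1)(r_3-r_2)(r_3-r_1)\bigr)^{-1/3}.
\]
The product of root differences squared is the discriminant, so this equals $B(\tfrac13,\tfrac13)\,|\Delta(x)|^{-1/6}$. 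Using the given factorization
\[
\Delta(x)=4x(x-1)\bigl((3+2t)^3x+t^4+2t^3\bigr),
\]
the outer integral becomes
\[
\int_0^s\frac{dx}{x^{1/6}(1-x)^{1/6}(x_0-x)^{1/6}(s-x)^{a}(w-x)^{4/3-a}}.
\]
Its singular points are $0,s,1,x_0,w$ and $\infty$, with exponents $\tfrac16,a,\tfrac16,\tfrac16,\tfrac43-a$ and $\tfrac13$ at infinity; these sum to $2$. Before applying the lemma I will check that $x_0>s$ in the stated range. Corollary \ref{corollary: poles + infty} with $x_1=0$, $x_2=s$ gives $R_i=s/x_i$, which reproduces the left-hand $F_D^{(3)}$ with parameters $\tfrac56;\tfrac16,\tfrac16,\tfrac43-a;\tfrac{11}6-a$ and arguments $s,\ s/x_0,\ s/w$. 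The constant is $2^{-1/3}B(\tfrac13,\tfrac13)B(\tfrac56,1-a)$ times powers of $s$, $x_0$, $w$ and of the leading coefficient $(3+2t)^3$.

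\emph{Integrating in $x$ first.} For fixed $r_1(s)<y<r_2(s)$, the polynomial $f$ is linear in $x$:
\[
f=\bigl((2+t)-(3+2t)y\bigr)\bigl(x-F_t(y)\bigr).
\]
The inner integral over $F_t(y)<x<s$ has poles at $F_t(y)$, $s$ and $w$ with exponents $\tfrac23,a,\tfrac43-a$, which sum to $2$. Lemma \ref{lemma: 3 poles} gives
\[
B(\tfrac13,1-a)\,(s-F_t)^{-1/3}(w-F_t)^{a-1}(w-s)^{-1/3}\,|(2+t)-(3+2t)y|^{-2/3}.
\]
Next I use
\[
s-F_t(y)=\frac{-f(s,y)}{(3+2t)y-(2+t)},\qquad f(s,y)=(y-r_1(s))(y-r_2(s))(y-r_3(s)),
\]
and similarly with $w$ in place of $s$. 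The linear factors $(3+2t)y-(2+t)$ then combine with total exponent $\tfrac13+(1-a)-\tfrac23$. The $y$-integrand becomes
\[
\bigl(y-r_1(s)\bigr)^{-1/3}\bigl(r_2(s)-y\bigr)^{-1/3}\bigl(r_3(s)-y\bigr)^{-1/3}\prod_{i}\bigl|y-r_i(w)\bigr|^{a-1}\,\bigl|(3+2t)y-(2+t)\bigr|^{\,\cdot}.
\]
I expect the leftover power of the linear factor to be $-(\tfrac13+1-a-\tfrac23)+\ldots$, and it must vanish, or else go to infinity once the point at infinity is accounted for. This is the step I will verify first. The exponents should be $\tfrac13,\tfrac13,\tfrac13$ at $r_1(s),r_2(s),r_3(s)$ and $1-a$ at each of the three $r_i(w)$. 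Their sum is
\[
1+3-3a=4-3a,
\]
so no pole at infinity is needed only if $4-3a=2$. Otherwise the leftover weight $2-(4-3a)$ sits at $\infty$ or at $y=\tfrac{2+t}{3+2t}$. This bookkeeping is the main obstacle.

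I then apply Lemma \ref{lemma: 4 poles} with $x_1=r_1(s)$, $x_2=r_2(s)$, $x_3=r_3(s)$ and $x_4,x_5,x_6=r_i(w)$, checking in the stated range that the $r_i(w)$ lie outside $(r_1(s),r_2(s))$. The lemma produces $F_D^{(3)}(1-\mu_1;\mu_4,\mu_5,\mu_6;2-\mu_1-\mu_2)$. Matching the right-hand side $\tfrac43-a;\,1-a,1-a,1-a;\,\tfrac83-2a$ forces $\mu_1=\mu_2=a-\tfrac13$. So the correct exponents at $r_1(s),r_2(s)$ must be $a-\tfrac13$, not $\tfrac13$. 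I would redo the inner computation carefully, possibly applying Lemma \ref{lemma: 3 poles} via a different normalization, until these come out. The arguments are then $R(r_i(w))$, exactly as in the statement.

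\emph{Comparing the two evaluations.} Equating them gives the identity, with $C$ the ratio of the two constants. The Beta factors must cancel: I check that
\[
B(\tfrac13,\tfrac13)B(\tfrac56,1-a)\cdot 2^{-1/3}
\]
equals
\[
B(\tfrac13,1-a)\,B(\cdots)
\]
via the duplication formula. The remaining algebraic factors should simplify using
\[
f(w,r_1(s))=\prod_i\bigl(r_1(s)-r_i(w)\bigr),
\]
together with $\prod_i r_i(s)$, $x_0$ and the identity
\[
(r_3-r_2)^2(r_3-r_1)^2(r_2-r_1)^2=|\Delta(s)|.
\]
This simplification yields the stated $C$.
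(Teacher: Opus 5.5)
Your strategy is the one the paper intends: it states this theorem as the outcome of the same two-order Fubini computation, without writing out details. As written, however, the proposal has a genuine gap at its central step. Your $x$-first inner integral is computed incorrectly, and you never derive the exponents $a-\frac13$ at $r_1(s),r_2(s)$; you only infer them from the target.

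The slip is in applying Lemma \ref{lemma: 3 poles} with $(x_1,x_2,x_3)=(F_t(y),s,w)$ and $(\mu_1,\mu_2,\mu_3)=(\frac23,a,\frac43-a)$. The factor $(x_2-x_1)^{\mu_3-1}$ is $(s-F_t)^{\frac13-a}$, not $(s-F_t)^{-1/3}$. Put $L(y)=(2+t)-(3+2t)y$. It is positive on $(r_1(s),r_2(s))$: the inequalities $f(s,0)=(2+t)s>0>(1+t)(1-s)=f(s,1)$ put $r_2(s)$ in $(0,1)$, and $\frac{2+t}{3+2t}>1$ for $t<-1$. Then $s-F_t=f(s,y)/L$ and $w-F_t=f(w,y)/L$. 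The total power of $L$ is $-\frac23+(a-\frac13)+(1-a)=0$, so
\[
\int_{F_t(y)}^{s}\frac{dx}{f(x,y)^{2/3}(s-x)^{a}(w-x)^{\frac43-a}}
=B\bigl(\tfrac13,1-a\bigr)\,(w-s)^{-\frac13}\,f(s,y)^{\frac13-a}\,f(w,y)^{a-1}.
\]
The exponents are therefore $a-\frac13$ at each $r_i(s)$ and $1-a$ at each $r_i(w)$. They sum to exactly $2$, so nothing sits at $\infty$ or at $y=\frac{2+t}{3+2t}$, and your ``main obstacle'' disappears. Lemma \ref{lemma: 4 poles} then gives the right-hand $F_D^{(3)}$ with arguments $R(r_i(w))$. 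Its hypothesis holds because $f(w,y)=f(s,y)+(w-s)L(y)>0$ on $(r_1(s),r_2(s))$. You also need $w<1$, as in the section's setup, for the $r_i(w)$ to be real.

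The constant comparison needs correcting as well, because the Beta factors do not cancel by themselves. Applying duplication at $z=\frac43-a$ and $z=\frac13$ gives
\[
\frac{B(\frac13,1-a)\,B(\frac43-a,\frac43-a)}{2^{-1/3}\,B(\frac13,\frac13)\,B(\frac56,1-a)}=2^{2a-\frac53}.
\]
This power of $2$ is absorbed by the discriminant. With roots taken at $s$, write $(r_2-r_1)(r_3-r_1)=\Delta(s)^{1/2}/(r_3-r_2)$. The Lemma \ref{lemma: 4 poles} constant then contains $\Delta(s)^{\frac56-a}(r_3(s)-r_2(s))^{3a-3}f(w,r_1(s))^{a-1}(w-s)^{-1/3}$, and
\[
\Delta(s)^{\frac56-a}=4^{\frac56-a}s^{\frac56-a}\bigl((s-1)((3+2t)^3s+t^4+2t^3)\bigr)^{\frac56-a}.
\]
Three cancellations then give exactly the stated $C$:
\begin{enumerate}
\item The $4^{\frac56-a}$ kills $2^{2a-\frac53}$.
\item The $s^{\frac56-a}$ cancels the one in the first-order constant $2^{-1/3}(3+2t)^{-1/2}s^{\frac56-a}x_0^{-1/6}w^{a-\frac43}$ (times Betas), which you get from $x=su$.
\item The identity $(3+2t)^{1/2}x_0^{1/6}=(-t)^{1/2}(t+2)^{1/6}$ takes care of the remaining $t$-factors.
\end{enumerate}
A smaller slip: in the first order the exponent at $\infty$ is $2-(\frac12+\frac43)=\frac16$, not $\frac13$. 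It does not enter the $F_D$ parameters.
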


\subsection{Degeneration to Figure \ref{figure: (3,1)+(0,1)+(0,1), Degeneration}}
If we take $s\rightarrow 1$ and $w\rightarrow x_{0}$, then the form degenerates to:
\begin{eqnarray*}
    \omega&=&\frac{dx\wedge dy}{f(x,y)^{\frac{2}{3}}(1-x)^{a}(x_{0}-x)^{\frac{4}{3}-a}}.
\end{eqnarray*}

\begin{figure}[H] %H为当前位置, !htb为忽略美学标准, htbp为浮动图形
\centering %图片居中
\includegraphics[width=0.3\textwidth]{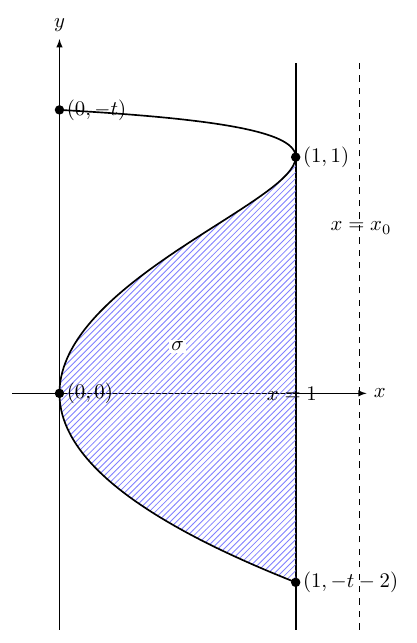} %插入图片, []中设置图片大小, {}中是图片文件名
\caption{(3,1)+(0,1)+(0,1), Degeneration} %最终文档中希望显示的图片标题
\label{figure: (3,1)+(0,1)+(0,1), Degeneration}
\end{figure}

Then the cycle $\sigma$ (see Figure \ref{figure: (3,1)+(0,1)+(0,1), Degeneration}) becomes
\begin{eqnarray*}
    \sigma&=&\{(x,y)\in \RR^{2}\ | \ 0<x<1, \ r_1(x)<y<r_2(x)\}\\
              &=&\{(x,y)\in \RR^{2}\ | -t-2<y<1, \ F_{t}(y)<x<1\}.
\end{eqnarray*}

We obtain the following transformation formula of Gauss hypergeometric function:

\begin{cor}
\label{not known yet}
     Let $t,a$ be real numbers in the following range $-\frac{3}{2}<t<-1$ and $a<\frac{5}{6}$, we have
\begin{eqnarray*}
    \pFq{2}{1}{\frac{4}{3}-a,2-2a}{3-3a}{-\frac{t(2t+3)}{t+2}}
    = C\cdot \pFq{2}{1}{\frac{5}{6},\frac{3}{2}-a}{\frac{5}{3}-a}{-\frac{(2t+3)^{3}}{t^{3}(t+2)}},
\end{eqnarray*}
where $C=2^{2a-2}3^{\frac{5}{2}-3a}(-t)^{3a-\frac{9}{2}}(t+2)^{\frac{1}{2}-a}(t+3)^{2}$.
\end{cor}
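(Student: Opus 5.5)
We compute $\int_\sigma \omega$ for the degenerate form
\[
\omega=\frac{dx\wedge dy}{f(x,y)^{\frac{2}{3}}(1-x)^{a}(x_0-x)^{\frac{4}{3}-a}}
\]
over the degenerate cycle $\sigma$ in the two Fubini orders, and compare. Note that $x_0>1$ for $-\frac32<t<-1$, so the integrand is regular on the interior of $\sigma$.

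\textbf{Integration in $y$ first.} For fixed $0<x<1$ the cubic $f(x,\cdot)$ has three simple real roots $r_1(x)<r_2(x)<r_3(x)$. The inner integral $\int_{r_1(x)}^{r_2(x)} dy/|f|^{2/3}$ has exponents $\frac23,\frac23,\frac23$ and sums to $2$, so Lemma \ref{lemma: 3 poles} applies. It gives
\[
B\!\left(\tfrac13,\tfrac13\right)\bigl((r_2-r_1)(r_3-r_1)(r_3-r_2)\bigr)^{-1/3}=B\!\left(\tfrac13,\tfrac13\right)|\Delta(x)|^{-1/6}.
\]
Using $\Delta(x)=4x(x-1)\bigl((3+2t)^3x+t^4+2t^3\bigr)$, we have $(3+2t)^3x+t^4+2t^3=(3+2t)^3(x-x_0)$, with $(3+2t)^3>0$. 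The outer integral is therefore
\[
\int_0^1 \frac{dx}{x^{1/6}(1-x)^{a+1/6}(x_0-x)^{\frac{3}{2}-a}},
\]
up to explicit powers of $2$ and $(3+2t)$. This has poles $0,1,x_0,\infty$, with $\infty$ carrying exponent $2-\frac16-(a+\frac16)-(\frac32-a)=\frac16$. Corollary \ref{corollary: poles + infty} gives $B(\frac56,\frac56-a)\,x_0^{a-3/2}\,{}_2F_1(\frac56,\frac32-a;\frac53-a;1/x_0)$, and $1/x_0=-\frac{(2t+3)^3}{t^3(t+2)}$, which is the right-hand side. The condition $a<\frac56$ is exactly what makes the exponent at $x=1$ less than $1$.

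\textbf{Integration in $x$ first.} For $y\in(-t-2,1)$, $f$ is linear in $x$: $f=(y^3+ty^2)-x\bigl((3+2t)y-(2+t)\bigr)$. I first need to confirm the sign of $(3+2t)y-(2+t)$ on this interval, so that $f>0$ on $F_t(y)<x<1$. The inner integral
\[
\int_{F_t(y)}^{1}\frac{dx}{(x-F_t(y))^{2/3}(1-x)^{a}(x_0-x)^{4/3-a}}
\]
has exponents summing to $2$, so Lemma \ref{lemma: 3 poles} gives
\[
B\!\left(\tfrac13,1-a\right)(1-F_t)^{-2/3}(x_0-F_t)^{a-1}(x_0-1)^{1/3}.
\]
Combined with the prefactor $|(3+2t)y-(2+t)|^{-2/3}$, this reduces everything to polynomials in $y$:
\[
1-F_t(y)\propto \frac{-(y-1)^2(y+t+2)}{(3+2t)y-(2+t)},
\qquad
x_0-F_t(y)\propto \frac{(y-y^*)^3}{(3+2t)y-(2+t)}.
\]
The first factorisation uses $f(1,y)=(y-1)^2(y+t+2)$, the double root reflecting the tangency at $x=1$. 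For the second, $x_0$ is the other discriminant point, where $f(x_0,\cdot)$ has a triple root; I expect $y^*=-t/(2t+3)$ and will check it via $f(x_0,y)=(y-y^*)^3$. After the denominators cancel, the factor $((3+2t)y-(2+t))$ should drop out entirely, which is the key algebraic step. If so, the remaining integral is
\[
\int_{-t-2}^{1}(y+t+2)^{-2/3}(1-y)^{-4/3}(y-y^*)^{3a-3}\,dy,
\]
up to constants. Its exponents are $\frac23,\ \frac43,\ 3-3a$, and $0$ at $\infty$, which sum to $2$ plus $\ldots$. Here lies an obstacle: the exponent $\frac43$ at $y=1$ makes the integral diverge, so I must double-check the powers. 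With the correct bookkeeping, the exponents should be $\frac13+a-1$-type values producing ${}_2F_1(\frac43-a,2-2a;3-3a;\cdot)$, where $\frac43-a$ and $3-3a$ come from the endpoint exponents.

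\textbf{Matching and the expected obstacle.} Once exponents are settled, the $y$-integral has only three finite poles plus $\infty$. Corollary \ref{corollary: poles + infty}, or Lemma \ref{lemma: 4 poles} after a Möbius map, yields a ${}_2F_1$ in the cross-ratio $\frac{1-(-t-2)}{y^*-(-t-2)}$ or an equivalent. This should simplify to $-\frac{t(2t+3)}{t+2}$, possibly after an Euler/Pfaff transformation. The final step equates the two expressions and divides out the Beta factors, using Gamma identities such as the duplication and triplication formulas to get the constant $C=2^{2a-2}3^{\frac52-3a}\cdots$.

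I expect the main obstacle to be the exact bookkeeping in the $x$-first order: verifying that the linear factor cancels, finding the correct exponents at $y=1$ and $y^*$ (both are tangency points of $D_1$ with the degenerate lines), and reducing the Gamma constants to the stated closed form. Alternatively, I could obtain the corollary as the limit $s\to1$, $w\to x_0$ of the preceding theorem, using that the points $R(r_i(w))$ coalesce. There, too, the coalescence analysis is the delicate part.
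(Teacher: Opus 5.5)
Your overall route is the paper's: integrate the degenerate form over the degenerate cycle in both orders. Your $y$-first computation is correct and gives the right-hand side, including the role of $a<\frac56$. The gap is in the $x$-first order, which is the half that must produce the left-hand side. There the proposal makes two concrete errors and then asserts the answer without deriving it.

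First, Lemma~\ref{lemma: 3 poles} with $(x_1,x_2,x_3)=(F_t(y),1,x_0)$ and $(\mu_1,\mu_2,\mu_3)=(\frac23,a,\frac43-a)$ gives
\[
B(\tfrac13,1-a)\,(1-F_t)^{\frac13-a}(x_0-1)^{-\frac13}(x_0-F_t)^{a-1},
\]
not $(1-F_t)^{-2/3}(x_0-1)^{1/3}$. This is why you ended up with the divergent exponent $\frac43$ at $y=1$.

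Second, $f(x_0,\cdot)$ has no triple root. The discriminant $4x(x-1)(3+2t)^3(x-x_0)$ vanishes only simply at $x_0$. Also, by the sum of roots a triple root would have to be $-t/3$, which is consistent only for $t=-3$. Solving $f=\partial_y f=0$ instead gives
\[
f(x_0,y)=(y-y_d)^2(y-y_s),\qquad y_d=-\frac{t(t+2)}{2t+3},\quad y_s=\frac{t}{2t+3}.
\]
So the outer integrand has four finite singular points and none at $\infty$, not three plus $\infty$. Your guessed $y^*=-t/(2t+3)$ is not a root of $f(x_0,\cdot)$, and the argument is not $\frac{t+3}{y^*+t+2}$.

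With these corrections the computation goes through. Write $L(y)=(3+2t)y-(2+t)$, which is negative on $(-t-2,1)$. Then $1-F_t=(1-y)^2(y+t+2)/|L|$ and $x_0-F_t=(y_d-y)^2(y-y_s)/|L|$. The power of $|L|$ is $-\frac23-(\frac13-a)-(a-1)=0$, so the cancellation you anticipated does occur, leaving
\[
B(\tfrac13,1-a)\,(x_0-1)^{-\frac13}\int_{-t-2}^{1}\frac{dy}{(y+t+2)^{a-\frac13}(1-y)^{2a-\frac23}(y-y_s)^{1-a}(y_d-y)^{2-2a}}.
\]
The hypotheses of Lemma~\ref{lemma: 4 poles} hold:
\begin{itemize}
\item the exponents sum to $2$;
\item $a<\frac56$ is exactly the condition $2a-\frac23<1$ at $y=1$;
\item $y_s<-t-2<1<y_d$, since both inequalities are equivalent to $(t+1)(t+3)<0$.
\end{itemize}
Applying the lemma with $x_3=y_s$, $x_4=y_d$ gives directly $\pFq{2}{1}{\frac43-a,\,2-2a}{3-3a}{R_4}$, where
\[
R_4=\frac{(y_d-y_s)(t+3)}{(y_d+t+2)(1-y_s)}=-\frac{t(2t+3)}{t+2}.
\]
No Euler or Pfaff transformation is needed.

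For the constant, use $x_0-1=-\frac{(t+1)(t+3)^3}{(2t+3)^3}$. The ratio of the two orders' constants is then $(-t)^{3a-\frac92}(t+2)^{\frac12-a}(t+3)^2$ times a quotient of Beta functions. That quotient reduces to $2^{2a-2}3^{\frac52-3a}$ by duplication at $z=\frac13$ and $z=\frac56-a$ and triplication at $z=1-a$, as you guessed.
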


\bibliography{reference}
\bibstyle{alpha}
	
\Addresses

\end{document}